\theoremstyle{plain}
\newtheorem{theorem}{Theorem}[section]
\newtheorem{lem}[theorem]{Lemma}
\newtheorem{prop}[theorem]{Proposition}
\newtheorem{conjecture}[theorem]{Conjecture}
\theoremstyle{remark}
\newtheorem{question}[theorem]{Question}
\theoremstyle{definition}
\newtheorem{definition}[theorem]{Definition}
\newtheorem{example}[theorem]{Example}
\numberwithin{equation}{section}
\newcommand{\PMST}{\textnormal{Prob}_{\textnormal{MST}}}
\newcommand{\N}{\mathbb{N}}
\newcommand{\Z}{\mathbb{Z}}
\newcommand{\ra}{\rightarrow}
\newcommand{\R}{\mathbb{R}}
\newcommand{\E}{\mathbb{E}}
\newcommand{\G}{\mathcal{G}}
\newcommand{\V}{\mathcal{V}}
\newcommand{\mE}{\mathcal{E}}
\newcommand{\mA}{\mathcal{A}}
\newcommand{\mT}{\mathcal{T}}
\newcommand{\mF}{\mathcal{F}}
\newcommand{\mb}{\mathfrak{b}}
\newcommand{\tP}{\tilde{P}}
\newcommand{\Var}{\text{Var}}
\DeclareMathOperator{\cycle}{Cycle}
\DeclareMathOperator{\cut}{Cut}
\begin{document}

\title{On the minimum spanning tree distribution in grids}
\author {Kristopher Tapp}

\begin{abstract}
We study the minimum spanning tree distribution on the space of spanning trees of the $n$-by-$n$ grid for large $n$.  We establish bounds on the decay rates of the probability of the most and the least probable spanning trees as $n\ra\infty$, and we develop general tools for studying the decay rates of spanning tree families.
\end{abstract}

\maketitle

\section{Introduction}
Let $G=G(n)$ denote the $n$-by-$n$ square grid graph, which has $n^2$ vertices, and let $\mathcal{T}(G)$ denote the set of all spanning trees of $G$.  The MST (\emph{minimal spanning tree}) distribution on $\mathcal{T}(G)$ assigns to each $T\in\mathcal{T}(G)$ the probability, $\PMST(T)$, that $T$ will result from the algorithm that builds a spanning tree one edge at a time with each next edge uniformly randomly selected, along the way rejecting each addition that would create a cycle.  This is equivalent to applying Kruskal's algorithm to find a minimum spanning tree of $G$ with respect to uniformly identically distributed edge weights in $[0,1]$.

One purpose of this paper is to better understand the support of the MST distribution on $\mathcal{T}(G(n))$ asymptotically as $n\ra\infty$.  For this, we define a \emph{family of spanning trees}, $\mathcal{F}$, to mean a choice of a spanning tree $T_n\in\mathcal{T}(G(n))$ for each\footnote{All definitions and results will generalize effortlessly to a family that is only defined for an infinite subset of $\N$, like the fractal family in Figure~\ref{F:families}.} $n\in\N$.  Figure~\ref{F:families} shows three particular families that we will study.  We will first prove the following.

\begin{theorem}\label{T:bound2} For any family $\mathcal{F}=\{T_n\in\mathcal{T}(G(n))\mid n\in\N\}$,
$$ \PMST(T_n) \geq \left(\frac{1}{4}+o(1)\right)^{n^2} \,\text{ and }\,  \PMST(T_n) \leq \left(\frac{1}{2}+o(1)\right)^{n^2}.$$
\end{theorem}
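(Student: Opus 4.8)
\emph{Proof strategy.} I would prove the two inequalities separately; the lower bound is the easy one. Fix $n$, set $m=2n(n-1)=|E(G(n))|$ and $k=n^2-1=|E(T_n)|$, and realize the MST distribution via i.i.d.\ uniform edge weights. If the $k$ lightest weights sit on exactly the edges of $T_n$, then Kruskal's algorithm accepts precisely those $k$ edges (they form a spanning tree, so no cycle is ever closed) and rejects every other edge, so $T_n$ is the minimum spanning tree. Since each of the $\binom{m}{k}$ choices of "which $k$ edges are lightest" is equally likely,
\[
\PMST(T_n)\ \ge\ \binom{m}{k}^{-1}\ \ge\ 2^{-m}\ =\ 2^{-2n(n-1)}\ >\ \Big(\tfrac14\Big)^{n^2},
\]
which already yields the lower bound (in fact with no $o(1)$ correction).

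For the upper bound I would first record a recursion. Let $p_H(S)$ be the probability that Kruskal's algorithm on a connected (multi)graph $H$ with a uniformly random edge ordering returns the spanning tree $S$, so that $p_{G(n)}=\PMST$. Kruskal always accepts the globally lightest edge first, that edge is uniform over $E(H)$, and for the output to be $S$ it must lie in $S$; conditioning on it and contracting gives $p_H(S)=|E(H)|^{-1}\sum_{e\in E(S)}p_{H/e}(S/e)$, with the one-vertex graph as base case. Iterating over all orderings $\sigma$ of the $k$ edges of $T_n$ yields $\PMST(T_n)=\sum_{\sigma}\prod_{j=1}^{k}m_j(\sigma)^{-1}$, where $m_j(\sigma)$ is the number of edges remaining after contracting the first $j-1$ edges of $\sigma$ and discarding the loops this creates. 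Two lower bounds on $m_j(\sigma)$ do the job. Trivially $m_j(\sigma)\ge k-j+1$, since the $k-j+1$ not-yet-contracted tree edges are still present. More importantly, $m_j(\sigma)\ge m-2(j-1)$: a non-tree edge becomes a loop only once its whole fundamental $T_n$-path has been contracted, hence only if both its endpoints lie in one of the subtrees $U_1,U_2,\dots$ comprising the contracted subforest; since on each line of the grid a subgraph has at most one fewer edge than it has vertices on that line, a $v$-vertex subgraph of the grid has at most $2v-2$ edges, so the number of non-tree edges inside a $U_t$ with $v_t$ vertices is at most $(2v_t-2)-(v_t-1)=v_t-1$, and $\sum_t(v_t-1)$ equals the number $j-1$ of contracted edges. (The weaker bound $2v$ coming from the degree-$4$ constraint would not suffice here.)

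Combining, $m_j(\sigma)\ge\max\{m-2(j-1),\,k-j+1\}$, and since $m-2(j-1)\ge k-j+1$ precisely for $j\le m-k+1=(n-1)^2+1$, summing over the $k!$ orderings gives
\[
\PMST(T_n)\ \le\ k!\prod_{j=1}^{(n-1)^2+1}\frac{1}{m-2(j-1)}\prod_{j=(n-1)^2+2}^{k}\frac{1}{k-j+1}\ =\ \frac{(n^2-1)!\,(n-2)!}{2^{(n-1)^2+1}\,(n^2-n)!\,(2n-3)!},
\]
and a Stirling estimate of the right-hand side is $2^{-n^2+O(n\log n)}=(1/2)^{(1-o(1))n^2}$, which is the claimed upper bound. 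The one step that really uses the geometry is $m_j(\sigma)\ge m-2(j-1)$ --- the fact that the contracted non-tree edges collapse into loops no faster than one per contraction; this is where the sparsity of the grid is essential, and on a denser host graph both this estimate and the theorem would fail.
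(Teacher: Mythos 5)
Your proposal is correct, and both of your key estimates are sound; it follows the same skeleton as the paper's proof but justifies the two crucial steps by genuinely different arguments. Your identity $\PMST(T_n)=\sum_{\sigma}\prod_j m_j(\sigma)^{-1}$, obtained by unrolling the contraction recursion, is exactly the Lyons--Peres formula used in the paper (with $m_j(\sigma)=P_{M-j+1}(\alpha)$ for $\alpha$ the reversed order), so that part is a re-derivation rather than a new route. For the lower bound, the paper stays inside the formula and bounds every term via $P_i(\alpha)\le N+i$, giving $M!/\prod P_i \ge \binom{M+N}{M}^{-1}$; you instead bypass the formula with the single event that the $n^2-1$ tree edges are the $k$ lightest, which yields the numerically identical bound $\binom{m}{k}^{-1}\ge 4^{-n(n-1)}$ by a one-line symmetry argument (and, as you note, with no asymptotic correction) --- more elementary, and it generalizes to any host graph. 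For the upper bound, your inequality $m_j(\sigma)\ge m-2(j-1)$ is precisely the paper's $P_i(\alpha)\ge 2(i+1)-2n$ in disguise, but you prove it by a sparsity count (a $v$-vertex subgraph of the grid has at most $2v-2$ edges, so a contracted subtree on $v_t$ vertices swallows at most $v_t-1$ chords, i.e.\ at most one chord is lost per contraction in aggregate), whereas the paper argues geometrically that each of the $i+1$ components of the forest meets at least four ``generalized cut edges'' at its extreme vertices, of which only $4n$ are exterior connectors to $\Z^2$. Both arguments are valid and give the same constant; yours avoids the embedding into the infinite lattice and makes explicit that grid sparsity (rather than planarity of the drawing) is what drives the bound, while the paper's is shorter once the geometric picture is accepted. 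Your handling of the tail via $m_j\ge k-j+1$ and the crossover at $j=(n-1)^2+1$ plays the same role as the paper's discarding of the first $n-1$ factors and the estimate $(M-n)!/M!\ge M^{-n}$, and your final Stirling-type estimate $2^{-n^2+O(n\log n)}$ correctly delivers $\limsup \frac{\ln \PMST(T_n)}{n^2}\le \ln\frac12$.
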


We define the \emph{decay base}, $Q=Q(\mF)$, of a family $\mF$ as the value such that
$$ \PMST(T_n) =  \left(Q+o(1)\right)^{n^2}, \text{ or equivalently, } \lim\frac{\ln\left(\PMST(T_n)\right)}{n^2}=\ln Q,$$
provided such a value exists.  Even when such a value does not exist, we can define $Q^-=Q^-(\mF)$ and $Q^+=Q^+(\mF)$ as the infimum and supremum of such values, so Theorem~\ref{T:bound2} says that $Q^-(\mF)\geq 1/4$ and $Q^+(\mF)\leq 1/2$ for every family $\mF$.  In particular, $Q(\mF)\in\left[\frac 1 4,\frac 1 2\right]$ for any family $\mF$ for which $Q(\mF)$ exists.

To calibrate our understanding of Theorem~\ref{T:bound2}, recall from~\cite{Wu} that $|\mT(G(n))|$ grows asymptotically as $\mathfrak{b}^{n^2}$, where $\mathfrak{b}$ is related to Catalan's constant $C$ as follows:
$$\mb=\exp(4C/\pi)= 3.2099\cdots.$$
The average value of $\PMST(T)$ over all trees $T\in\mathcal{T}(G(n))$ equals $\frac{1}{|\mT(G(n))|}$.  If we consider a family $\mF$ is ``average'' in the sense that $\PMST(T_n)$ approximately equals $\frac{1}{|\mT(G(n))|}$ for each $n$, then the decay base of this ``average'' family equals $\frac{1}{\mathfrak{b}}$.

\begin{figure}[ht!]\centering
   \scalebox{.85}{\includegraphics{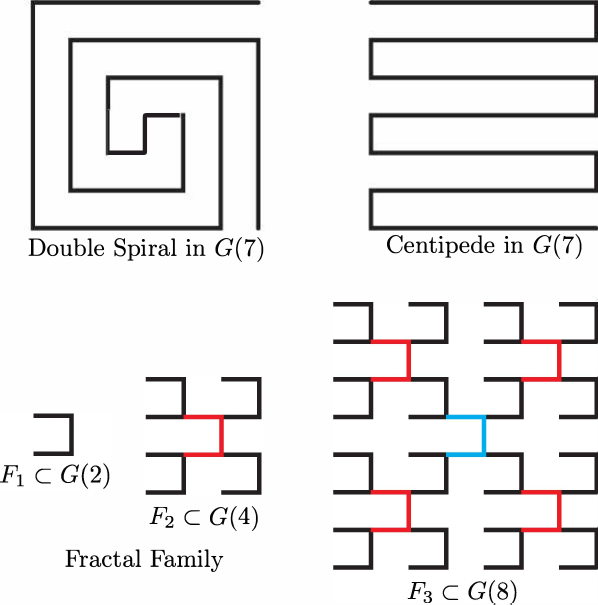}}
\caption{Three spanning tree families.  For the \emph{double spiral} and \emph{centipede} family, only the tree in $G(7)$ is shown, but the pattern extends in the obvious way.  For the \emph{fractal} family, introduced in~\cite{Alon}, each quadrant of $F_{k+1}$ is a copy of $F_{k}$, and these copies are connected in the middle by a copy of $F_1$.}\label{F:families}
\end{figure}

One purpose of this paper is to begin developing tools for computing or at least bounding the decay bases of particular families like those in Figure~\ref{F:families}.  Our main result along these lines is Theorem~\ref{T_pre}, which will establish that, for any family $\mF$ satisfying some natural hypotheses,
\begin{equation}\label{E:trailor}
Q^-(\mF) \geq\frac{1}{e\cdot\overline{f}},
\end{equation}
where $\overline{f}=exp\left(\int_0^1\ln f(x)\,\text{dx}\right)$ is the geometric mean of the power series $f$ that in Definition~\ref{D:pre_powerseries} we will naturally associate to $\mF$.  More precisely, the $k^\text{th}$ coefficient of $f$ reports, among edges of $G(n)$ not in $T_n$ for large $n$, the limiting proportion that make a cycle of length $k+1$ when added to $T_n$.

This paper is organized as follows.  In Section 3,  we introduce a construction that associates a bipartite graph to a spanning tree.  Among other things, this construction helps partially account for an empirical observation from the mathematics of redistricting, namely that spanning trees drawn from the MST distribution tend to result in more compact district maps.

In Section 4, we discuss the Lyons-Peres formula for the MST probability of a spanning tree, which turns out to depend only on the associated bipartite graph.  Because of this, many of the propositions in this paper could be framed as results about bipartite graphs.  We then prove Theorem~\ref{T:bound2} in Section 5.

In Section 6, we state Theorem~\ref{T_pre}, which was informally summarized in Equation~\ref{E:trailor}.  The proof of this theorem is involved and is spread through Sections 7-9.  In Section 10, we study the example families of Figure~\ref{F:families}.  Finally, in Section 11 we provide some questions for further study and a conjecture about the sharpness of Equation~\ref{E:trailor}.  More specifically, the only reason that Equation~\ref{E:trailor} is a lower bound and not an equality is a single application of Jensen's inequality.  We conjecture an \emph{exact} value for the decay base $Q(\mF)$, and sketch a possible proof of this conjecture that relies on controlling the Jensen gap.

\section*{Acknowledgments} The author is pleased to thank Moon Duchin for helpful suggestions, Misha Lavrov for assistance via Stack Exchange, and the referees for valuable comments.
\section{Related work} 
There is a limited body of work relating the MST and UST (uniform spanning tree) distributions.  The authors of~\cite{complete} show that the expected diameter of a MST
of the complete graph on $n$ vertices is on the order of $n^{1/3}$, whereas the expected diameter of a UST is $n^{1/2}$.  Chapter 10 of~\cite{Russ_book} studies the MST distribution in a setting generalized to infinite graphs.  The authors of~\cite{Russ} derived an explicit formula for $\PMST(T)$.  This formula is the basis of most of the results of our paper.  Very recently, the authors of~\cite{Models} derived a dual version of this formula.  We will include both the formula and its dual version in Section 4.  The authors of~\cite{Models} also provide new tools for studying the MST distribution and generalizations in which the edge weights are not i.i.d but rather are drawn from separate distributions. 

Similar to~\cite{Models}, one of the motivations of our paper is the application to redistricting. The starting point of modern redistricting algorithms is a tiling of a state into precincts.  The tiling's dual graph $\mathcal{G}$ has a vertex for each precinct and an edge between each pair of vertices that correspond to adjacent precincts.  A key step in modern sampling algorithms is to draw a random spanning tree on a certain subgraph of $\mathcal{G}$.  The methods in~\cite{SMC} and~\cite{MergeSplit} use Wilson's algorithm to draw a UST, whereas the method in~\cite{ReCom} by default uses Kruskal's algorithm to draw a MST.  It was empirically demonstrated in~\cite{Rev_ReCom} that MST-based methods result in plans with more compact districts (more specifically, with fewer cut edges).  This empirical data has not been fully explained, and is one motivation for this paper.

There is a fairly large body of literature on a question that is perhaps more than tangentially related to ours.  The seminal paper~\cite{Alon} defined the \emph{average stretch} of a spanning tree $T$ of a graph $G=(V,E)$ as 
\begin{equation}\label{E:avg_stretch}
\text{avg-stretch}(T) = \text{avg}\{\text{dist}_T(e)\mid e\in E\},
\end{equation}
where $\text{dist}_T(e)$ denotes the path-length distance in $T$ between the two endpoints $e$ (which equals $1$ if $e\in T$).  This paper and the follow-up literature focused on minimizing this measurement, with special attention to the case of grid graphs~\cite{KLRW},\cite{Koh},\cite{Lieb}.  In fact, the fractal family from Figure~\ref{F:families} was introduced in~\cite{Alon} as an example of a family with low average stretch. 

The relevance is that $\PMST$ is negatively correlated with the average stretch of $T$; that is, a tree with lower average stretch tends to have a higher MST probability.  Although we are not aware of any rigorous bounds along these lines, Section 4 will mention an analytic reason to expect this correlation, and Figure~\ref{F:scatter_stretch_mst} will provide empirical evidence.  Furthermore, the two optimization problems have the same solutions in the complete graph $K_n$.  It was recently proven in \cite{Models} that a star has the highest MST probability in $K_n$ while a path has the lowest.  On the other hand, it is straightforward to show that a star minimizes avg-stretch while a path maximizes it.  

For grid graphs, these problems seem harder.  Identifying the spanning tree of $G(n)$ that minimizes avg-stretch is an open problem for $n>10$, and even less is known about the MST-maximizer.  Asymptotic bounds as $n\rightarrow\infty$ may be more obtainable than exact optima for particular $n$,  which motivates the work in this paper to obtain bounds on the decay bases of families of spanning trees.
\section{The Bipartite Graph}
In this section, we introduce a simple construction that naturally associates a bipartite graph to a spanning tree.

Let $G=(V,E)$ be a connected simple undirected graph (not necessarily a grid).  Let $T$ be a spanning tree of $G$ regarded as a subset of $E$.  We'll call the edges of $T$ \emph{branches} and the edges of $T^c = E-T$ \emph{chords}. For each chord $e\in T^c$, let $\cycle(e)$ denote the unique cycle in the graph obtained by adding $e$ to $T$.  For each branch $e\in T$, let $\cut(e)$ denote the cut set between the two components of $T-\{e\}$; that is, $\cut(e)$ includes $e$ and each chord that's incident to both components.

We define a bipartite graph $\G=(\V_1,\V_2,\mE)$ associated to the spanning tree $T$ as exemplified in Figure~\ref{F:bipartite}.  It contains a set $\V_1$ of nodes corresponding to the branches of $T$, and a set $\V_2$ of nodes corresponding to the chords, with adjacency determined by whether the branch belongs to the cycle associated to the chord.  Denote $M=|\V_1|=|T|$ and $N=|\V_2|=|T^c|$.  Notice that a branch $e\in\V_1$ is adjacent to all of the chords in $\cut(e)$, while a chord $e\in\V_2$ is adjacent to all of the branches in $\cycle(e)$.  To avoid confusion, we'll use the terms \emph{vertices} and \emph{edges} when referring to $G$, and the terms \emph{nodes} and \emph{links} when referring to $\G$.

\begin{figure}[ht!]\centering
   \scalebox{.9}{\includegraphics{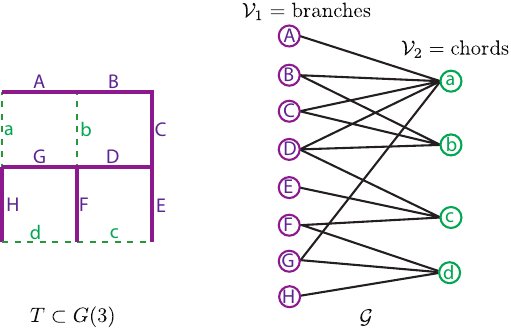}}
\caption{The bipartite graph $\G$ associated to a spanning tree $T\subset G(3)$}\label{F:bipartite}
   \end{figure}

For $i\in\{1,2\}$, let $d_i$ denote the average degree in $\G$ of the nodes in $\V_i$.  Notice that average stretch is a linear function of $d_2$ because Equation~\ref{E:avg_stretch} can be re-written as $\text{avg-stretch}(T) = \frac{M+N\cdot d_2}{M+N}$; thus, optimizing average stretch (among spanning trees of a fixed graph) is the same as optimizing $d_2$.

On the other hand, $d_1+1$ equals the expected number of cut edges between the two components obtained by removing a random branch from $T$.  This paper focuses on the case where $G=G(n)$ for large $n$, in which case\footnote{More precisely, any spanning tree $T$ of $G(n)$ has $M=n^2-1$ branches and $N=n^2-2n+1$ chords.}
$M\approx N$ so $d_1\approx d_2$.  In this setting, a tree with small average stretch (small $d_2$) will on average form a bipartition with fewer cut edges when a random one of its branches is removed (because $d_1$ is small).

In the literature on the mathematics of redistricting, it has been observed empirically that a spanning tree drawn from the MST distribution has exactly this advantage compared to a spanning tree drawn uniformly -- it tends to form a more compact bipartition when one of its branches is removed; see~\cite{revRecom}.  This observation is partially accounted for by the above comments together with the previously mentioned observation that a spanning tree with higher MST probability tends to have a lower average stretch.

\section{An Explicit Formula for $\PMST(T)$}
In this section, we state and re-interpret the explicit formula for MST probability due to Lyons and Peres~\cite{Russ}, together with a dual version of the formula from~\cite{Models}. We will re-frame both formulas in a way that makes it clear that $\PMST(T)$ depends only on the bipartite graph $\G$ associated to $T$.  In fact, that is the framing under which the duality of the two formulas is best understood. 

Exactly as in the previous section, let $G$ be a connected graph (not necessarily a grid), let $T$ be a spanning tree of $G$, let $\mathcal{G}=(\V_1,\V_2,\mathcal{E})$ be the associated bipartite graph, and define $M = |\V_1|=|T|$ and $N=|\V_2|=|T^c|$.  Let $S(T)$ denote the set of permutations (orders) of $T$.

\begin{definition}\label{D:PassingTimes} For $\alpha=(e_1,...,e_M)\in S(T)$ and $1\leq i\leq M$, define
$$P_i(\alpha) = |\V_1^i|+|\V_2^i| = i + |\V_2^i|,$$
where $\V_1^i = \{e_1,...,e_i\}\subset\V_1$, and $\V_2^i\subset\V_2$ denotes the set of nodes in $\V_2$ that are connected by an arc in $\G$ to at least one node in $\V_1^i$.
\end{definition}
This definition of $P_i(\alpha)$ can be illuminated via the following metaphor.  The nodes of $\V_2$ are unlit candles, while the nodes of $\V_1$ are people who wish to light the candles using a single lighter that they must share.  The people take turns in the order $\alpha$.  On his/her turn, a person lights each not-yet-lit candle to which he/she is connected by a link in $\G$.  Lighting one candle takes one unit of time, and passing the lighter to the next person also takes one unit of time.  In this story, $P_i(\alpha)$ equals the time at which the $i^\text{th}$ pass of the lighter occurs.  We will therefore refer to $(P_1(\alpha),...,P_M(\alpha))$ as the sequence of \emph{passing times} associated with $\alpha$.

\begin{prop}[Prop 5.1 of \cite{Russ}; see also Theorem 3.5 of~\cite{Models}]\label{P:russ}
$$\PMST(T) = M!\cdot \E \left(\frac{1}{\prod_{i=1}^M P_i(\alpha)}\right) = \sum_{\alpha\in S(T)} \frac{1}{\prod_{i=1}^M P_i(\alpha)}, $$
where the $\E $ denotes the expectation with respect to the uniform distribution on $S(T)$.
\end{prop}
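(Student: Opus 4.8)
The plan is to track Kruskal's algorithm directly and compute the probability that it produces $T$ by conditioning on the order in which the branches of $T$ are accepted. Recall that the algorithm examines the edges of $G$ in a uniformly random order, accepting an edge iff it does not close a cycle; the output is $T$ precisely when the accepted edges are exactly the branches of $T$. Since only the relative order of edges matters, I would work with a uniformly random total order on $E=T\sqcup T^c$. For a fixed permutation $\alpha=(e_1,\dots,e_M)$ of the branches, I would compute the probability of the event $A_\alpha$ that the algorithm accepts the branches and that $e_1,\dots,e_M$ is the order in which they are accepted; then $\PMST(T)=\sum_{\alpha\in S(T)}\Prob(A_\alpha)$, and the goal is to show $\Prob(A_\alpha)=\frac{1}{\prod_{i=1}^M P_i(\alpha)}$.

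The key observation is the following characterization of $A_\alpha$ in terms of the random order on $E$: reading the edges of $E$ from \emph{last} to \emph{first}, the event $A_\alpha$ holds iff for every $i$ from $M$ down to $1$, the edge $e_i$ is the first branch among $\{e_1,\dots,e_i\}$ to appear (equivalently, $e_i$ precedes $e_1,\dots,e_{i-1}$), \emph{and} every chord rejected by the algorithm appears after the branch that "causes" its rejection — but in fact the chord condition is automatic. Here is why it is cleaner to process from the end: consider the subgraph $H_i$ with vertex set $V$ and edge set $\{e_{i+1},\dots,e_M\}$ together with all chords; its "cut edges" are exactly the $P_i(\alpha)$ edges joining distinct components of $\{e_{i+1},\dots,e_M\}$. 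I claim $A_\alpha$ occurs iff, for each $i=M,M-1,\dots,1$, among the $P_i(\alpha)$ cut edges of $H_i$ the branch $e_i$ is the one that appears earliest in the random order. Indeed, $e_i$ is the branch accepted at the $i$-th acceptance exactly when, at the moment it is examined, the current forest is $\{e_{i+1},\dots,e_M\}$ reduced to the accepted ones — by downward induction this forest spans the same components as $\{e_{i+1},\dots,e_M\}$ — so $e_i$ is accepted (rather than some competing edge closing or not closing a cycle first) iff $e_i$ beats every other edge that still joins two of those components, i.e.\ every other cut edge of $H_i$; and any chord of $H_i$ that is not a cut edge already lies inside a single component, so it will be correctly rejected whenever it is examined regardless of its position. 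Conversely, if these $M$ "earliest among the cut set" conditions all hold, one checks by induction that the algorithm accepts precisely $e_1,\dots,e_M$ in this order.

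Granting the claim, the probability computation is a standard exchangeability argument. The conditions are: for each $i$, $e_i$ is first among a specified set $\cut_i$ of $P_i(\alpha)$ edges, where $\cut_M\supsetneq\cut_{M-1}\supsetneq\cdots$ in the sense that (crucially) the edges of $\cut_{i}$ are a subset of $\cut_{i+1}\setminus\{e_{i+1}\}$ — removing the branch $e_{i+1}$ merges two components and can only destroy cut edges, never create them. So the events can be generated sequentially: first pick the relative order of $\cut_M$ and require $e_M$ first (probability $1/P_M(\alpha)=1/(M+N)$); among the remaining edges, $\cut_{M-1}$ is a subset not containing $e_M$, and conditioned on everything so far its internal order is still uniform, so requiring $e_{M-1}$ first costs $1/P_{M-1}(\alpha)$; continuing, $\Prob(A_\alpha)=\prod_{i=1}^M \frac{1}{P_i(\alpha)}$. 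Summing over $\alpha\in S(T)$ gives the second equality in the Proposition, and the first is just the definition of expectation over the uniform distribution on $S(T)$, which has $M!$ elements.

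The main obstacle is making the nesting argument in the last paragraph fully rigorous: one must verify carefully that conditioning on the relative orders already fixed for $\cut_M,\dots,\cut_{i+1}$ leaves the relative order of the not-yet-constrained edges of $\cut_i$ uniform, which is where one uses that each new constraint set is a subset of the previous one minus its already-determined minimum. A clean way to phrase this is to assign i.i.d.\ uniform labels in $[0,1]$ to the edges and observe that $\min$ over a subset and the argmin are independent of the relative order of the complement, then iterate. The combinatorial claim in the second paragraph also deserves a careful induction, but it is essentially just unwinding the definition of Kruskal's algorithm.
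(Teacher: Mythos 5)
Your proposal is correct and takes essentially the same approach as the paper's sketch: you decompose $\PMST(T)$ over the order in which the branches of $T$ are accepted and show each order contributes $\prod_{i=1}^M 1/P_i(\alpha)$, with $P_i(\alpha)$ counting the cut edges among which the next branch must come first; your static Kruskal-with-random-order characterization plus the nested exchangeability argument is just a more detailed justification of the paper's step-by-step ``uniform among the $P_i(\alpha)$ acceptable edges'' transition argument. One harmless slip: you introduce $A_\alpha$ as acceptance in the order $e_1,\dots,e_M$, but your characterization and computation (and the paper's convention) actually describe the reversed order $e_M,\dots,e_1$; since the sum runs over all $\alpha\in S(T)$, this relabeling does not affect the result.
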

\begin{proof}[Sketch of Proof]
The proof in~\cite{Russ} (a more complete version of which is provided in~\cite{Models}) involves noticing that for a fixed permutation $\alpha=(e_1,e_2,...,e_M)$,
$$\frac{1}{\prod_{i=1}^M P_i(\alpha)} = \frac{1}{P_M(\alpha)}\cdot\frac{1}{P_{M-1}(\alpha)}\cdots\frac{1}{P_1(\alpha)}$$
equals the probability that the algorithm described in the first paragraph of this paper will yield $T$ in the \emph{reverse} order $(e_M,...,e_1)$.  This is because, if $\{e_{i+1},...,e_M\}$ are the edges added to the tree so far, then $P_i(\alpha)=|\V_1^i|+|\V_2^i|$ is the number of possibilities for the next edge that wouldn't be rejected by the algorithm.  This is because $|\V_1^i|$ counts the not-yet-used branches, while $|\V_2^i|$ counts the chords that wouldn't be rejected.
\end{proof}
Imagine running Kruskal's algorithm and recording the number of available choices at each step (edges that wouldn't be rejected).  This is a decreasing list of numbers that starts with $N+M$.  The reversal of this list is the sequence of passing times.

\begin{example}
Let $T\subset G(3)$ be the spanning tree in Figure~\ref{F:bipartite} and let $\alpha\in S(T)$ be the alphabetical order as labeled; that is $\alpha=(A,B,...,H)$.  Here $M=8$ and $N=4$.  The passing times are:
$$\{P_i(\alpha)\} = \{2,4,5,7,8,10,11,12\}.$$
This can be derived using only the image of $\G$ in Figure~\ref{F:bipartite}, and it can be understood via the candle metaphor by indicating what happens at each of the $M+N=12$ time steps as follows.
\begin{center}
\begin{tabular}{ c c c c c c }
 1. Aa, & \textbf{2}. A$\ra$B, & 3. Bb,  & \textbf{4}. B$\ra$C, & \textbf{5}. C$\ra$D, & 6. Dc \\
 \textbf{7}. D$\ra$E, & \textbf{8}. E$\ra$F  ,& 9. Fd ,& \textbf{10}. F$\ra$G ,& \textbf{11}. G$\ra$H ,& \textbf{12}. H$\ra$
\end{tabular}
\end{center}
The coding here shows what happens at that time step: at time $1$ person $A$ lights candle $a$, then at time 2 person $A$ passes the lighter to person $B$, and so on.  Notice that $P_{8}=12$ is the last passing time, so we need to amend the candle metaphor by adding that the final person performs one final pass to put the lighter down.
\end{example}

The dual version of Proposition~\ref{P:russ} from~\cite{Models} says the following.
\begin{prop}[Theorem 3.4 of~\cite{Models}]\label{P:dual} Proposition~\ref{P:russ} remains true if the roles of $\V_1$ and $\V_2$ are reversed in Definition~\ref{D:PassingTimes}.  
\end{prop}
In the reversed version, $\V_2$ represents the people and $\V_1$ represents the candles.  More precisely, for $\alpha = (e_1,...,e_N)\in S(T^c)$ and $1\leq i\leq N$, the reversed version would define  
$P_i(\alpha)=|\V_2^i|+|\V_1^i| = i+|\V_1^i|$,
where $\V_2=(e_1,...,e_i)$, and $\V_1^i$ denotes the set of nodes in $\V_1$ that are connected by an arc in $\mathcal{G}$ to at least one node in $\V_2$.  With these definitions, it is true that 
$\PMST(T) = \sum_{\alpha\in S(T^c)} \frac{1}{\prod_{i=1}^N P_i(\alpha)}.$  

We will not use the dual version, so in this paper, the passing times will always be defined as in Definition~\ref{D:PassingTimes} (not the dual version defined in the previous paragraph).  However, it is worth mentioning that most of the results of this paper would remain valid with the roles of $\V_1$ and $\V_2$ reversed.

Figure~\ref{F:scatter_stretch_mst} suggests that $\PMST(T)$ is negatively correlated with the average stretch of $T$.  Proposition~\ref{P:russ} makes this quite believable; if the average degree in $\V_1$ is high, then there are more arcs in $\G$, which would be expected to lead to higher passing times (the candles are lit earlier) with respect to a random ordering of $\V_1$.

\begin{figure}[ht!]\centering
   \scalebox{.5}{\includegraphics{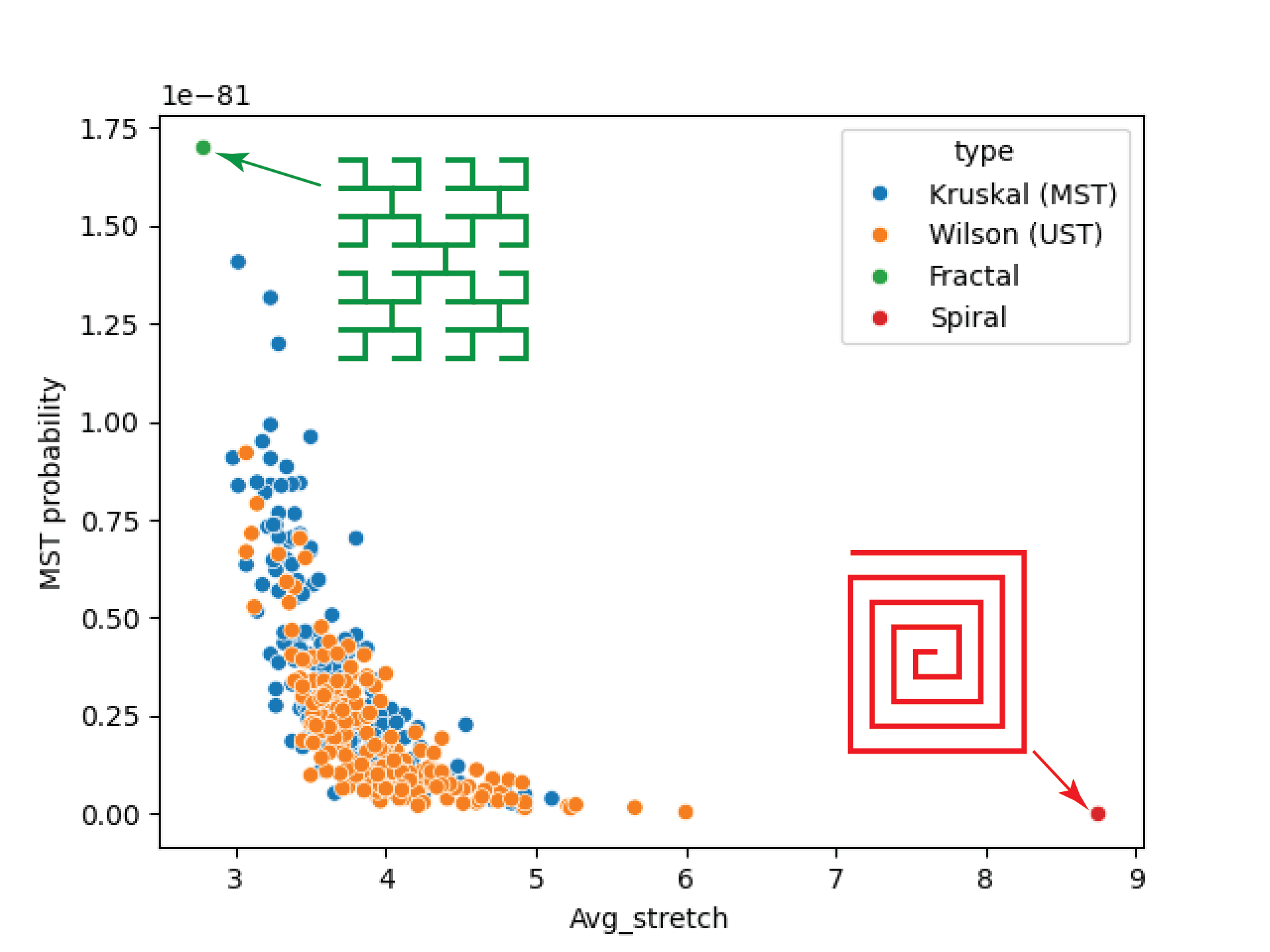}}
\caption{$\PMST$ versus avg-stretch for 2 special trees in $G(8)$ and 200 random trees (100 each from Kruskal's and Wilson's algorithm).  Here $\PMST$ is approximated via Proposition~\ref{P:russ} using a random sample of 10000 choices of $\alpha\in S(T)$.}\label{F:scatter_stretch_mst}
   \end{figure}
\section{Proof of Theorem~\ref{T:bound2}}
This section is devoted to the proof of Theorem~\ref{T:bound2}.  We begin by introducing notation for asymptotic growth.  If $f,g:\mathbb{N}\ra\R^+$ and $b,p\in\R^+$, then we define:
\begin{enumerate}
    \item $f(n)\approx g(n)$ means $\frac{f(n)}{g(n)}\rightarrow 1$.
    \item $f(n)<\approx g(n)$ means $\lim \frac{f(n)}{g(n)}\leq 1$.
    \item $f(n)\sim b^{n^p}$ means $f(n)=\left(b+o(1)\right)^{n^p}$, or equivalently,
    $\frac{\ln(f(n))}{n^p}\rightarrow \ln(b)$.
    \item $f(n)<\sim b^{n^p}$ means $f(n)\leq\left(b+o(1)\right)^{n^p}$, or equivalently,
    $$\limsup \frac{\ln(f(n))}{n^p}\leq\ln(b).$$   
    \item $f(n)>\sim b^{n^p}$ means $f(n)\geq\left(b+o(1)\right)^{n^p}$, or equivalently,
    $$\liminf \frac{\ln(f(n))}{n^p}\geq\ln(b).$$    
\end{enumerate}

\begin{proof}[Proof of Theorem~\ref{T:bound2}]
Any spanning tree $T$ of $G(n)$ has $M=n^2-1$ branches and $N=n^2-2n+1$ chords. In particular, $N<M<n^2$.

Proposition~\ref{P:russ} says that $\PMST(T) = M!\cdot \E \left(\frac{1}{\prod P_i(\alpha)}\right)$, so it will suffice to find lower and upper bounds for $\frac{M!}{\prod P_i(\alpha)}$ over all $T\in\mathcal{T}(G(n))$ and all $\alpha\in S(T)$.  For convenience, we'll instead work with its reciprocal $\frac{\prod P_i(\alpha)}{M!}$.

For any $T\in\mathcal{T}(G(n))$ and any $\alpha\in S(T)$, the passing times $\{P_i(\alpha)\}$ form a sequence of $M$ strictly increasing integers between $1$ and $M+N$.  We know that $P_i(\alpha)\leq N+i$ because the upper extreme possibility for the sequence is $\{N+1, N+2, ..., N+M\}$.  So Stirling's approximation gives:
\begin{align}\label{E:upperbound}
\frac{\prod P_i(\alpha)}{M!} & \leq \frac{(M+N)!}{N!M!} = {M+N \choose M} \leq { 2M \choose M} \approx \frac{4^M}{\sqrt{\pi M}}< 4^M <4^{n^2},
\end{align}
from which it follows that $\PMST(T)>\sim \left(\frac{1}{4}\right)^{n^2}$ as desired.

Next, to prove the other bound, we wish to establish that $2i$ is an approximate lower bound for $P_i(\alpha)$.  For this, consider the forest (acyclic graph) $F_i$ that has the same vertex set as $G(n)$ and has the edge set $\{e_{i+1},e_{i+2},...,e_{M}\}$.  This forest has $i+1$ components (some of which might be individual vertices).  A ``cut edge'' of $F_i$ means an edge of $G(n)$ that connects two different components of $F_i$.  Observe that $P_i(\alpha)$ equals the number of cut edges of $F_i$.  So we must prove that a forest with $i+1$ components has at least approximately $2i$ cut edges.  For this, notice that $J=4n$ equals the number of ``exterior connectors,'' which means edges that connect vertices of $G(n)$ to vertices of its complement in the infinite lattice $\Z^2$.  Every component of $F_i$ is incident to at least $4$ edges that are either cut edges or exterior connectors, namely at its top, bottom, left, and right-most vertices, so $4(i+1)\leq 2P_i(\alpha) + J$, or $P_i(\alpha)\geq 2(i+1)-\frac 12 J$.

Since $\frac J 2=2n$, we learn that $P_i(\alpha)\geq 2(i+1)-2n$ for all $i\in\{1,2,...,M\}.$
This gives no information about the indices $i\in\{1,2,...,n-1\}$, but applying the result to the remaining indices yields
$$\prod P_i(\alpha)\geq \prod_{i\geq n} P_i(\alpha) \geq 2\cdot 4\cdot 6\cdots (2M-2n).$$

We therefore have the following.
$$\frac{\prod P_i(\alpha)}{M!}\geq 2^{\left(M-n\right)} \cdot\frac{\left(M-n\right)!}{M!}
\geq \frac{2^{\left(M-n\right)}}{M^n}.$$
It follows that
$$\PMST(T) = \E\left(\frac{M!}{\prod P_i(\alpha)} \right)
\leq \frac{M^n}{2^{(M-n)}} 
= \frac{\left(n^2-1\right)^n}{2^{(n^2-1-n)}}
\sim \frac{1}{2^{n^2}},$$
so $\PMST(T)<\sim \left(\frac{1}{2}\right)^{n^2} $.
\end{proof}

\section{Statement of main result}
The next three sections will develop machinery that, among other things, will yield a lower bound on the decay base of certain types of families of spanning trees.  In this section, we preview these upcoming sections by stating and roughly explaining this bound, which we already mentioned informally in Equation~\ref{E:trailor}.  We begin by defining the additional hypotheses that will be needed for our spanning tree families.

\begin{definition}\label{D:pre_bounded} The family $\mathcal{F}=\{T_n\subset G(n)\mid n\in\N\}$ is \emph{bounded} if the average degree (among vertices in the bipartite graph associated to $T_n$) divided by $n^2$ goes to zero as $n\ra\infty$.
\end{definition}

For example, the fractal family satisfies a much stronger hypothesis than bounded; it was proven in~\cite{Alon} that the average degree grows as $C\cdot\ln(n)$ for this family.  The centipede family is also bounded, but the double spiral family is not.

\begin{definition}\label{D:pre_degreemass} The \emph{degree-mass} function of a spanning tree $T$ is the function $p:\N\ra[0,1]$ defined so that for all $d\geq 1$, $p(d)$ equals the portion of the nodes in $\V_2$ that have degree $d$ (in the bipartite graph associated with $T$).
\end{definition}

For a grid graph, the support of the degree-mass function $p$ includes only odd numbers that are $\geq 3$.  This is because the length of every cycle of $G(n)$ is an even number that is $\geq 4$.

\begin{definition}\label{D:pre_powerseries} Let $\mF = \{T_n\subset G(n)\mid n\in\N\}$ be a family of spanning trees.  Let $p_n$ denote the degree-mass function for $T_n$.  We call $\mF$ \emph{convergent} if for each $d$, $p_\infty(d)=\lim_{n\ra\infty}p_n(d)$ exists.  In this case, the \emph{power series associated to} $\mF$ is the function $f:[0,1]\ra\R$ defined as follows.
$$ f(x) = 1+x-\sum_{d\geq 3} p_\infty(d)\cdot (1-x)^d.$$
\end{definition}

The centipede, double spiral and fractal families are all convergent. For each of these three families, we'll compute (at least a truncation of) the associated power series in Section 10.  An example of a non-convergent family can be obtained by alternating between members of two different families, like say the centipede and fractal families.

We will require one more hypothesis, which is technical but is also weak.
\begin{definition}\label{D:neighborcon} Let $\mF = \{T_n\subset G(n)\mid n\in\N\}$ be a family of spanning trees.  For $n\in\N$, denote $\Delta_n=\V_2\times\V_2$ (in the bipartite graph associated to $T_n$), and let  $\tilde\Delta_n$ be the set of pairs $(v,w)\in\Delta_n$ that have at least one common neighbor in $\V_1$.  The family $\mF$ is called \emph{neighbor-independent} if
$\lim_{n\ra\infty}\frac{|\tilde\Delta_n|}{|\Delta_n|}=0$.
\end{definition}

The definition says that most pairs of nodes of $\V_2$ do not share a common neighbor.  Referring back to $T_n\subset G(n)$ for large $n$, this means that most pairs of chords complete non-overlapping cycles in $T_n$.  We will show in Section 10 that the fractal and centipede families are both neighbor-independent.

One purpose of the next three sections is to prove the following.

\begin{theorem}\label{T_pre}
If $\mF=\{T_n\subset G(n)\mid n\in\N\}$ is a bounded, convergent, neighbor-independent family, and $f$ is its associated power series, then its decay base is bounded as
$$Q^-(\mF) \geq\frac{1}{e\cdot\overline{f}},$$
where $\overline{f}=exp\left(\int_0^1\ln f(x)\,\text{dx}\right)$ is the geometric mean of $f$.
\end{theorem}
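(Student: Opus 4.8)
The plan is to estimate $\PMST(T_n)$ from below using the Lyons–Peres formula in the form $\PMST(T_n)=M!\cdot\E(1/\prod_{i=1}^M P_i(\alpha))$, where the expectation is over a uniformly random order $\alpha$ of $\V_1$. By Jensen's inequality applied to the convex function $t\mapsto e^{-t}$, it is enough to control $\E\big(\sum_{i=1}^M \ln P_i(\alpha)\big)$ from above, since then
$$
\E\!\left(\frac{1}{\prod_{i=1}^M P_i(\alpha)}\right)
= \E\!\left(\exp\!\Big(-\!\sum_{i=1}^M \ln P_i(\alpha)\Big)\right)
\ \ge\ \exp\!\Big(-\E\sum_{i=1}^M \ln P_i(\alpha)\Big).
$$
Combined with Stirling ($\ln M! = M\ln M - M + o(n^2)$) and $M = n^2 - 1 \sim n^2$, Theorem~\ref{T_pre} reduces to showing that
$$
\E\sum_{i=1}^M \ln P_i(\alpha)\ \le\ M\ln M + \Big(\int_0^1 \ln f(x)\,\mathrm{d}x\Big)\,n^2 + o(n^2).
$$

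The next step is to rewrite the left side using the candle metaphor and Equation~\ref{E:V1V2}: $P_i(\alpha) = i + |\V_2^i|$, where $\V_2^i$ is the set of chords already ``lit'' after the first $i$ branches have been placed in the order $\alpha$. Writing $i = xM$ with $x\in(0,1]$ and approximating $\ln P_i(\alpha) = \ln M + \ln\big(x + |\V_2^i|/M\big)$, the sum $\sum_{i}\ln M$ contributes exactly $M\ln M$, matching the $\ln M!$ term, and we are left to show
$$
\frac{1}{n^2}\,\E\sum_{i=1}^M \ln\!\Big(x_i + \tfrac{|\V_2^i|}{M}\Big)\ \longrightarrow\ \int_0^1 \ln f(x)\,\mathrm{d}x,\qquad x_i = i/M.
$$
Interpreting the sum as a Riemann sum, this will follow if for each fixed $x\in(0,1)$ the quantity $x + |\V_2^{xM}|/M$ concentrates around $f(x)$. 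Here is where the hypotheses enter. For a uniformly random subset $\V_1^i$ of size $i = xM$, a chord $v\in\V_2$ of degree $d$ fails to be lit precisely when none of its $d$ neighbors lies in $\V_1^i$, which (using $M\approx N$, so sampling branches by a fraction $x$) happens with probability $\approx (1-x)^d$. Thus $\E|\V_2^i|/M \approx \sum_{d} p_n(d)\big(1 - (1-x)^d\big)$, and convergence of the degree-mass functions $p_n\to p_\infty$ plus the support restriction to $d\ge 3$ gives
$$
x + \frac{\E|\V_2^i|}{M}\ \longrightarrow\ x + 1 - \sum_{d\ge 3} p_\infty(d)(1-x)^d\ =\ f(x).
$$
The boundedness hypothesis is what makes the $o(n^2)$ error terms (from $M\ne N$, from Stirling, and from the tail of the degree distribution) genuinely negligible; in particular it controls the contribution of very high-degree chords, for which $(1-x)^d$ is tiny but which could otherwise spoil the uniform convergence of the series.

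The main obstacle — and the reason the proof is spread over Sections 7–9 — is upgrading the expectation estimate to a concentration estimate: one needs $|\V_2^{xM}|/M$ to be close to its mean with high probability, not merely in expectation, because $\ln$ is applied before averaging and because near $x\approx 0$ the argument $x + |\V_2^i|/M$ can be small, so multiplicative fluctuations are amplified inside the logarithm. The events ``$v$ is lit'' for different chords $v$ are not independent — two chords sharing a common branch neighbor are positively correlated — so a naive second-moment bound does not immediately close. This is exactly what the neighbor-connected hypothesis is designed to handle: either the fraction of correlated chord-pairs vanishes (so $\Var|\V_2^i| = o(M^2)$ by a direct second-moment computation, as for the fractal family), or the correlated pairs involve chords of degree tending to infinity, in which case each such chord is lit with probability tending to $1$ and contributes negligibly to the variance of the \emph{unlit} count. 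Either way one gets $|\V_2^{xM}|/M \to x + 1 - \sum_{d\ge3}p_\infty(d)(1-x)^d$ in probability, uniformly on compact subsets of $(0,1)$, and a separate crude bound (e.g. $P_i(\alpha)\ge i$, handled as in the proof of Theorem~\ref{T:bound2}) disposes of the small-$x$ region where the logarithm is delicate. Assembling these pieces and passing to the limit in the Riemann sum yields the claimed bound $Q(\mF)\ge 1/(e\,\overline f)$.
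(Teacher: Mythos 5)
Your proposal is correct and follows essentially the same route as the paper: Lyons--Peres plus Stirling, a single application of Jensen, and concentration of the scaled passing-time profile around $f$ via the boundedness/convergence/neighbor-connected hypotheses (this is exactly the content of Propositions~\ref{P:Pavg}--\ref{P:pi} and Lemma~\ref{L:var}, assembled through a Riemann-sum/tube argument as in Proposition~\ref{P:lulu}), with the small-$x$ region and the low-probability exceptional event absorbed by crude bounds. The only difference is where Jensen is placed --- you apply it to $t\mapsto e^{-t}$ on $\sum_i\ln P_i(\alpha)$, so you need $\E\bigl(\ln\mA(T_n)\bigr)\to\ln\overline{f}$, whereas the paper (Propositions~\ref{P:dodo} and~\ref{P:lulu}) applies it to $t\mapsto t^{-M}$ and shows $\E\bigl(\mA(T_n)\bigr)\to\overline{f}$; both rest on the same concentration lemma, your placement is formally at least as sharp since $\E\ln\mA\le\ln\E\mA$, and note that for the direction you need (an upper bound on $\E\sum_i\ln P_i$) the crude bound required near $x=0$ is $P_i(\alpha)\le M+N<2M$, not $P_i(\alpha)\ge i$, which would only be needed for the reverse inequality.
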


The basic idea will be to show that $f$ is the limit of the passing time sequences.  More precisely, for each $n$ one could uniformly choose $\alpha\in S(T_n)$ and then plot the sequence $\left\{\left(\frac i M,\frac{P_i(\alpha)}{M}\right)\right\}$, where $M=n^2-1=|T_n|$.  The reason for dividing by $M$ is to scale the plot from $[1,M]\times[1,2M]$ down to $[0,1]\times[0,2]$ so that these plots can be compared across different values of $n$.  In fact, as $n\ra\infty$, we'll show essentially that these plots look more and more like the graph of $f$ with higher and higher probability.  This intuition makes Theorem~\ref{T_pre} plausible, since Proposition~\ref{P:russ} can be used to relate $Q(\mF)$ to the geometric mean of the passing times.

\section{Approximate passing times}
In this section, we let $T\subset\mathcal{T}(G(n))$ and we derive an exact and an approximate formula for the expected value of an individual passing time.  Most of the results will also hold for not-necessarily-grid graphs.  Consider all running notation from Sections 3 and 4 to be defined with respect to $T$.  In particular, $p$ denotes the degree mass function of $T$.

\begin{prop}\label{P:Pavg} For each $i\in\{1,2,...,M\}$ we have:
\begin{align*}
\E (P_i(\alpha)) & = i + N\left(1-\sum_{d\geq 3} p(d)\cdot\frac{{M-d \choose i}}{{M \choose i}}  \right),
\end{align*}
where $\E$ denotes the expectation with respect to a uniform choice of $\alpha\in S(T)$.  Here the convention is that ${M-d \choose i}=0$ when $i>M-d$.
\end{prop}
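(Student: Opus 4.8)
The plan is to compute $\E(P_i(\alpha))$ by exploiting the identity \eqref{E:V1V2}, which says $P_i(\alpha) = i + |\V_2^i|$, where $\V_2^i$ is the set of chords whose cycle meets the branch-set $\V_1^i=\{e_1,\dots,e_i\}$. Since $|\V_1^i| = i$ is deterministic, by linearity of expectation it suffices to compute $\E(|\V_2^i|) = \sum_{v\in\V_2} \Pr(v\in\V_2^i)$, where the probability is over a uniformly random $\alpha\in S(T)$, equivalently over a uniformly random $i$-subset $\V_1^i\subset\V_1$ (since only the unordered set $\{e_1,\dots,e_i\}$ matters). So the core task is: for a fixed chord $v\in\V_2$ of degree $d=d_v$ in $\G$, compute the probability that a uniformly random $i$-subset of the $M$ branches includes at least one of the $d$ branches adjacent to $v$.

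The second step is a routine complementary-counting computation: $\Pr(v\in\V_2^i) = 1 - \Pr(v\notin\V_2^i) = 1 - \binom{M-d}{i}/\binom{M}{i}$, since $v\notin\V_2^i$ exactly when the $i$-subset avoids all $d$ neighbors of $v$, and there are $\binom{M-d}{i}$ such subsets out of $\binom{M}{i}$ total. The convention $\binom{M-d}{i}=0$ for $i>M-d$ correctly captures that such a chord is certainly ``lit'' once more than $M-d$ branches have been chosen. Summing over $v\in\V_2$ and grouping the chords by their degree $d$ — there are $N\cdot p(d)$ chords of degree $d$ by Definition~\ref{D:pre_degreemass} — gives
\[
\E(|\V_2^i|) = \sum_{d\geq 3} N\,p(d)\left(1 - \frac{\binom{M-d}{i}}{\binom{M}{i}}\right) = N\left(1 - \sum_{d\geq 3} p(d)\,\frac{\binom{M-d}{i}}{\binom{M}{i}}\right),
\]
using $\sum_{d\geq 3} p(d) = 1$. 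Adding the deterministic $i$ yields the claimed formula. One should note that on a grid the support of $p$ is among odd integers $\geq 3$, but the formula is stated and proved for general $T$ with $d$ ranging over all degrees that actually occur.

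There is really no substantive obstacle here — the proposition is an exact combinatorial identity and the only subtlety is the bookkeeping. The one point deserving a careful sentence is the reduction from ``uniform $\alpha\in S(T)$'' to ``uniform $i$-subset $\V_1^i$'': the event $\{v\in\V_2^i\}$ depends on $\alpha$ only through the set $\{e_1,\dots,e_i\}$, and under the uniform distribution on orderings this set is uniformly distributed over all $i$-subsets of $\V_1$, so the probability can be evaluated by simple subset counting. After that, everything is elementary. (The genuinely hard work — converting this exact formula into the asymptotic statement that the rescaled passing-time plots converge to the graph of $f$ — is deferred to later sections, where one would Taylor-expand $\binom{M-d}{i}/\binom{M}{i} \approx (1 - i/M)^d$ with $i/M \to x$, plug in $p_\infty$, and control the error terms; but that is beyond the present proposition.)
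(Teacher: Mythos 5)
Your proof is correct and follows essentially the same route as the paper: both use $P_i(\alpha)=i+|\V_2^i|$ from Equation~\ref{E:V1V2}, linearity of expectation, the observation that only the unordered set $\{e_1,\dots,e_i\}$ matters, the hypergeometric complementary count $1-\binom{M-d}{i}/\binom{M}{i}$ for each chord, and grouping chords by degree via $p$. No substantive differences to report.
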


\begin{proof}
Fix $i\in\{1,2,...,M\}$.  For each node $v\in\V_2$, let $d(v)$ denote the degree of $v$, and let $\phi(v)$ denote the probability (with respect to a uniform choice of $\alpha$) that $v\in\V_2^i$.  Note that $\phi(v)$ equals the probability that a random selection without replacement of $i$ of the nodes of $\V_1$ includes at least one of the $d(v)$ nodes which are adjacent to $v$.  Thus,
\begin{align*}
\E (P_i(\alpha))
   & = \E \left( i+|\V_2^i|\right) \\
   &  = i + \sum_{v\in \V_2} \phi(v)
     = i + \sum_{v\in \V_2} \left( 1-\frac{{M-d(v) \choose i}}{{M \choose i}}  \right)\\
   & = i + \sum_{d\geq 3} p(d)\cdot N\cdot \left( 1-\frac{{M-d \choose i}}{{M \choose i}}  \right),
\end{align*}
which simplifies to the claimed result.
\end{proof}

The formula in Proposition~\ref{P:Pavg} is difficult to manage, so we next seek to approximate this formula, at least for \emph{bounded} families.  For this, we will define a sequence $\{\tP_1,\tP_2,...,\tP_M\}$, called the \emph{approximate passing times}, such that $\tP_i$ approximates the formula for $\E(P_i(\alpha))$ in Proposition~\ref{P:Pavg}.

\begin{definition}\label{D:apt} For each $i\in\{1,2,...,M\}$, the $i^\text{th}$ approximate passing time is
$$\tP_i = i + N\left( 1 - \sum_{d\geq 3}p(d)\left(\frac{M-i}{M}\right)^d\right).$$
\end{definition}

For the next proposition, we will scale the actual and approximate passing times so they can be meaningfully compared across a family of spanning trees.  For every $i$ and every $\alpha$,  $P_i(\alpha)\leq M+N<2M$, so the \emph{scaled passing time} $P_i(\alpha)/M$ lies in $(0,2)$.  Similarly, the \emph{scaled approximate passing time} $\tP_i/M$ lies in $(0,2)$.

\begin{prop}\label{P:Approxpasstimes} For each $i\in\{1,2,...,M\}$, $$\E(P_i(\alpha))\geq\tP_i.$$  Moreover,
if $\mF=\{T_n\subset G(n)\mid n\in\N\}$ is a bounded family of spanning trees, then for every $x\in(0,1)$,
$$
\lim_{n\ra\infty}\E \left(\frac{P_{\lfloor x M_n\rfloor}(\alpha)}{M_n}\right)
 =\lim_{n\ra\infty}\left(\frac{\tP_{\lfloor x M_n\rfloor}}{M_n}\right),
$$
provided the limit on the right exists, where $M_n = n^2-1 = |T_n|$.
\end{prop}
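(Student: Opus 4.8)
The plan is to prove the two assertions of Proposition~\ref{P:Approxpasstimes} separately. For the inequality $\E(P_i(\alpha))\geq\tP_i$, I would start from the exact formula in Proposition~\ref{P:Pavg} and compare it termwise with Definition~\ref{D:apt}. Since both formulas have the form $i + N(1 - (\text{something}))$, the claim $\E(P_i(\alpha))\geq\tP_i$ is equivalent to showing that for each degree $d$ in the support of $p$,
$$\frac{{M-d\choose i}}{{M\choose i}}\leq \left(\frac{M-i}{M}\right)^d.$$
This is a standard hypergeometric-versus-binomial comparison: the left side is the probability that a uniform random $i$-subset of $\{1,\dots,M\}$ misses a fixed $d$-subset, i.e.\ $\prod_{j=0}^{d-1}\frac{M-i-j}{M-j}$, and the right side is $\prod_{j=0}^{d-1}\frac{M-i}{M}$. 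Each factor on the left satisfies $\frac{M-i-j}{M-j}\leq\frac{M-i}{M}$ for $j\geq 0$ (cross-multiplying reduces to $-iM\leq -jM$, i.e.\ $j\le i$, which holds since ${M-d\choose i}=0$ once $i>M-d\ge M-d$ — more carefully, the product is empty or every factor is a genuine ratio of positive integers with $j<d\le$ relevant range), so the product inequality follows. The degenerate case $i>M-d$ (where the binomial coefficient is declared $0$) only makes the left side smaller, so the inequality is preserved.

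For the limiting statement, fix $x\in(0,1)$ and set $i=i_n=\lfloor xM_n\rfloor$. I would show that the difference
$$\E\!\left(\frac{P_{i_n}(\alpha)}{M_n}\right)-\frac{\tP_{i_n}}{M_n}
= \frac{N_n}{M_n}\sum_{d\geq 3}p_n(d)\left[\left(\frac{M_n-i_n}{M_n}\right)^d - \frac{{M_n-d\choose i_n}}{{M_n\choose i_n}}\right]$$
tends to $0$. Writing $r_n = i_n/M_n\to x$, I would bound the bracketed quantity for each $d$: using the factorization above, $0\leq (1-r_n)^d - \prod_{j=0}^{d-1}\frac{M_n-i_n-j}{M_n-j}$, and since each factor differs from $1-r_n$ by at most $O(d/M_n)$, a telescoping/union-type estimate gives that the bracket is at most $d^2/M_n$ (times a constant). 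Then
$$\left|\E\!\left(\frac{P_{i_n}}{M_n}\right)-\frac{\tP_{i_n}}{M_n}\right|\leq \frac{N_n}{M_n}\cdot\frac{C}{M_n}\sum_{d\geq 3}p_n(d)\,d^2.$$
Here is where the \emph{bounded} hypothesis enters: $\sum_d p_n(d)\,d = d_2(T_n)$ is the average degree in $\V_2$, and boundedness says $d_2(T_n)/n^2\to 0$, hence $d_2(T_n)/M_n\to 0$. To control $\sum_d p_n(d)d^2$ I would use that every node in $\V_2$ of the bipartite graph of a spanning tree of $G(n)$ has degree at most $M_n+1$ (a cycle cannot use more branches than exist), so $d^2\leq (M_n+1)d$ and thus $\sum_d p_n(d)d^2 \leq (M_n+1)\,d_2(T_n)$. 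Therefore the whole bound is $O\big(d_2(T_n)/M_n\big)\to 0$, which yields the claim. Finally, since $\E(P_{i_n}/M_n)\geq \tP_{i_n}/M_n$ and their difference $\to 0$, one limit exists iff the other does and they agree, which is exactly the stated conclusion.

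The main obstacle I anticipate is getting the right quantitative handle on $\sum_{d}p_n(d)d^2$ without overcounting: the crude bound $d\leq M_n+1$ is wasteful but, crucially, it is \emph{just} good enough here because it is multiplied by a factor $1/M_n$ coming from the $N_n/M_n\cdot 1/M_n$ prefactor, leaving a net $d_2(T_n)/M_n$ which boundedness kills. A subtler point is making the termwise bracket estimate $(1-r_n)^d - \prod\frac{M_n-i_n-j}{M_n-j}\le Cd^2/M_n$ uniform in $d$; I would prove this by induction on $d$ (or by the inequality $a^d-b^d\le d\,a^{d-1}(a-b)$ for $0\le b\le a\le 1$ applied with $a=1-r_n$ and $b$ the $d$-th root of the hypergeometric product, together with the per-factor gap $|a - \text{factor}_j|\le j/(M_n-j)\le d/(M_n-d)$). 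One must also confirm that the interchange of $\lim$ and $\sum_d$ is harmless, which follows since all the bracketed terms are nonnegative and the tail is dominated by the convergent bound just derived.
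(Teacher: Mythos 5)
Your first assertion is fine and is essentially the paper's own argument: compare the exact hypergeometric ratio ${M-d\choose i}/{M\choose i}=\prod_{j=0}^{d-1}\frac{M-i-j}{M-j}$ with $\left(\frac{M-i}{M}\right)^d$ factor by factor. (Small slip: cross-multiplying $\frac{M-i-j}{M-j}\leq\frac{M-i}{M}$ reduces to $0\leq ij$, not to $j\leq i$, so the factorwise inequality holds unconditionally and no case analysis is needed; the degenerate case $i>M-d$ is handled as you say.)

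The limiting statement, however, has a genuine quantitative gap: your final arithmetic is off by a factor of $M_n$. With your bracket bound $Cd^2/M_n$ and the crude degree bound $d\leq M_n+1$, the estimate you actually obtain is $\frac{N_n}{M_n}\cdot\frac{C}{M_n}\cdot(M_n+1)\,d_2(T_n)\approx C\,d_2(T_n)$, not $O\bigl(d_2(T_n)/M_n\bigr)$: the factor $M_n+1$ exactly cancels the $1/M_n$ you are counting on. And $d_2(T_n)$ does not tend to zero --- by Equation~\ref{E:log_growth} the average degree in $\V_2$ is at least $C_1\ln n$ for \emph{every} spanning tree of $G(n)$, and for bounded families it can grow much faster (for the centipede it grows like $n$), so your bound does not vanish and the second assertion is not proved as written. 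Two repairs are available. One is the paper's route: fix $\epsilon>0$, truncate the degree sum at $d_0=2\mu/\epsilon$ (with $\mu$ the average degree in $\V_2$) using Markov's inequality --- since each bracketed term lies in $[0,1]$, the tail $d\geq d_0$ contributes at most $\epsilon/2$ --- and apply the product estimate only for $d\leq d_0$, where the \emph{bounded} hypothesis makes $d_0$ negligible relative to $\min\{i_n,M_n-i_n\}$. The other is to keep the exponential factor you mention only in passing: from $a^d-b^d\leq d\,a^{d-1}(a-b)$ with $a\approx 1-x$, the bracket is at most $C\,d^2(1-x)^{d-1}/M_n$ (for, say, $d\leq M_n/2$; the remaining $d$ contribute exponentially small terms), and since $t\mapsto t^2(1-x)^{t}$ is bounded for fixed $x\in(0,1)$, the bracket is $O_x(1/M_n)$ uniformly in $d$, so the total error is $O_x(1/M_n)\to 0$ --- a sharper argument that does not even use boundedness. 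As it stands, though, the step ``therefore the whole bound is $O\bigl(d_2(T_n)/M_n\bigr)$'' is false and must be replaced by one of these arguments.
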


Here $\lfloor\rfloor$ denotes the floor function.  If say $x=.15$, then the proposition considers, for each spanning tree in the family, the actual and approximate scaled passing time that is about $15\%$ through the sequence.

\begin{proof}
Let $x\in(0,1)$.  Consider a fixed large value of $n$, and interpret all running notation with respect to the spanning tree $T=T_n$; in particular, $M_n$ will be written simply as $M$, and $p$ will denote the degree mass function of $T$.  Set $i={\lfloor x M\rfloor}$.

Comparing the formulas from Proposition~\ref{P:Pavg} and Definition~\ref{D:apt}, we see that it will suffice to prove that the following ``error'' becomes arbitrarily small as $n\ra\infty$:
\begin{equation}\label{E:error}
E = \sum_{d\geq 3} p(d)\cdot \left|\left(\frac{M-i}{M}\right)^d
                           - \frac{{M-d \choose i}}{{M \choose i}} \right|.
\end{equation}

Let $\epsilon>0$.  Let $\mu$ denote the average degree in $\V_2$.  Markov's inequality gives that
$\sum_{d\geq \frac{2\mu}{\epsilon}}p(d)\leq\frac{\epsilon}{2}$.  Thus,
$$E \leq\frac{\epsilon}{2}+ \sum_{d< \frac{2\mu}{\epsilon}} p(d)\cdot \left|\left(\frac{M-i}{M}\right)^d
                           - \frac{{M-d \choose i}}{{M \choose i}} \right|. $$

Since $\mF$ is bounded, we can assume that $n$ is chosen so that $\mu$ is arbitrarily small relative to $\min\{i,M-i\}$.  In fact, $n$ can be chosen so that each value of $d$ for which $d\leq \frac{2}{\epsilon}\cdot\mu$ is arbitrarily small relative to $\min\{i,M-i\}$.

But for values of $d$ that are small relative to $\min\{i,M-i\}$, there is small error to the natural approximation of the probability, if an urn contains $M$ balls of which $d$ are red, that a selection of $i$ of them includes no red balls:
\begin{align*}
\frac{{M-d \choose i}}{{M \choose i}}
   = \left(\frac{M-i}{M}\right)\cdot\left(\frac{M-i-1}{M-1}\right)
      \cdots\left(\frac{M-i-d}{M-d}\right)
  \approx \left(\frac{M-i}{M}\right)^d.
\end{align*}
The previous equation would also remain true if the final ``$\approx$'' were replaced by ``$\leq$'', which justifies the assertion that $\E(P_i(\alpha))\geq\tP_i.$
\end{proof}

\section{The power series of a family of spanning trees}

Proposition~\ref{P:Approxpasstimes} allows us to approximate the expected values of individual passing times.  The proposition ends with the caveat ``provided the limit on the right exists.''  In this section, we show that for a \emph{convergent} family, this limit not only exists but is given by a simple formula, and we can also establish a bound on the \emph{variance} of an individual passing time, denoted $\Var(P_i(\alpha))$.

If $\mF$ is a convergent family and $f$ is its associated power series (as in Definition~\ref{D:pre_powerseries}), it is straightforward to show that $\sum p_\infty(d)\leq 1$, but we'll later see examples where this sum is strictly less than $1$, so $p_\infty$ is not necessarily a valid probability function.  Notice that $f'(x)\geq 1$ for all $x\in[0,1]$.  Moreover, $f(0)\in[0,1]$ and $f(1)=2$.  If $p_\infty$ is a valid probability function, then $f(0)=0$.

\begin{prop}\label{P:pi} If $\mF = \{T_n\subset G(n)\mid n\in\N\}$ is a convergent family and $f$ is its associated power series, then for each $x\in(0,1]$,
$$ \lim_{n\ra\infty}\left(\frac{\tP_{\lfloor x M_n\rfloor}}{M_n}\right) = f(x),$$
where $M_n = n^2-1 = |T_n|$.
\end{prop}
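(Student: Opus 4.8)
The plan is to compute the limit directly from Definition~\ref{D:apt} by splitting the expression $\tP_{\lfloor xM_n\rfloor}/M_n$ into the three natural pieces: the term $i/M_n = \lfloor xM_n\rfloor/M_n$, the term $N/M_n$, and the sum $\sum_{d\geq 1}p_n(d)\bigl((M_n-i)/M_n\bigr)^d$. First I would observe that $\lfloor xM_n\rfloor/M_n \to x$ and that $N/M_n = (n^2-2n+1)/(n^2-1)\to 1$, so $(M_n - i)/M_n = 1 - i/M_n \to 1-x$. Thus the whole expression tends to $x + 1\cdot\bigl(1 - \lim_n \sum_{d\geq 1}p_n(d)(1-x)^d\bigr)$, and it remains to identify $\lim_n \sum_{d\geq 1}p_n(d)(1-x)^d$ with $\sum_{d\geq 3}p_\infty(d)(1-x)^d$; note that $p_n$ is supported on odd $d\geq 3$, so the $d=1$ and $d=2$ terms vanish and the sum effectively starts at $d=3$. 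Matching this against the definition $f(x) = 1 + x - \sum_{d\geq 3}p_\infty(d)(1-x)^d$ then finishes the proof.

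The one genuine subtlety — and the step I expect to be the main obstacle — is the interchange of limit and infinite sum: knowing $p_n(d)\to p_\infty(d)$ for each fixed $d$ does not by itself give $\sum_d p_n(d)(1-x)^d \to \sum_d p_\infty(d)(1-x)^d$, because the tail of the sum could in principle carry nonvanishing mass. To handle this I would invoke the bounded hypothesis together with Markov's inequality exactly as in the proof of Proposition~\ref{P:Approxpasstimes}: for $x\in(0,1)$ we have $0 < 1-x < 1$, and since $\sum_{d\geq 1}p_n(d)\leq 1$, the tail $\sum_{d\geq D}p_n(d)(1-x)^d$ is bounded by $(1-x)^D$ uniformly in $n$, which can be made smaller than any $\epsilon$ by choosing $D$ large (independently of $n$). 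For the remaining finite initial segment $\sum_{d< D}p_n(d)(1-x)^d$, termwise convergence $p_n(d)\to p_\infty(d)$ suffices since there are only finitely many terms. Combining these with the same $\epsilon$-argument for the $p_\infty$ tail gives convergence of the full sum. Actually, one does not even need boundedness here: the uniform geometric bound $(1-x)^d$ on the summands already dominates, so dominated convergence for series applies directly whenever $x\in(0,1)$.

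The endpoint case $x=1$ should be treated separately and is easier: there $(M_n - i)/M_n \to 0$, so every term $p_n(d)\bigl((M_n-i)/M_n\bigr)^d$ with $d\geq 1$ tends to $0$, the sum vanishes in the limit (again by the uniform geometric domination, now by $((M_n-i)/M_n)^d \le (M_n-i)/M_n \to 0$ for $d\geq 1$), and we get $1 + 1\cdot(1-0) = 2 = f(1)$, matching the earlier remark that $f(1)=2$. I would also note in passing that $i = \lfloor xM_n\rfloor \geq 1$ for large $n$ when $x>0$, so there is no degenerate $i=0$ issue, and that the convention ${M-d\choose i}=0$ for $i > M-d$ plays no role here since we are working with $\tP_i$ rather than with $\E(P_i(\alpha))$ itself.
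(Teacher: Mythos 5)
Your proposal is correct and follows essentially the same route as the paper's proof: a direct limit computation from Definition~\ref{D:apt} using $N/M_n\to 1$ and $(M_n-i)/M_n\to 1-x$, together with an $\epsilon$/$D$ splitting of the series into a finite initial segment (where $p_n(d)\to p_\infty(d)$ termwise) and a uniformly small tail. If anything, your tail control via the geometric bound $(1-x)^D$, uniform in $n$, is slightly more explicit than the paper's phrasing, and you correctly observe that boundedness is not needed, consistent with the proposition's hypotheses.
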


\begin{proof}
Let $x\in(0,1]$.  Consider a fixed large value of $n$, and interpret all running notation with respect to the spanning tree $T=T_n$.  Set $i=\lfloor x M\rfloor$ so that $\frac{M-i}{M}\approx 1-x$.

The tail $\sum_{d\geq D}p_\infty(d)$ can be made arbitrarily small by choosing $D$ large.  Furthermore, $n$ can be chosen large enough so that $p(d)$ is arbitrarily close to $p_\infty(d)$ for all $d\leq D$, where $p$ denotes the degree mass function of $T=T_n$.  In other words, the degree mass function of $T$ becomes arbitrarily close to $p_\infty$ where it matters.  This observation and the fact that $M\approx N$ give the following.
\begin{align*}
\tilde{P}_i
  & \approx i + M\left( 1 - \sum_{d\geq 3}p_\infty(d)\left(\frac{M-i}{M}\right)^d\right) \\
  & = M\cdot\left( 1+\frac{i}{M}- \sum_{d\geq 3} p_\infty(d)\left(\frac{M-i}{M}\right)^d\right)
   \approx M\cdot f\left(\frac{i}{M}\right)\approx M\cdot f(x).
\end{align*}

\end{proof}
The proposition essentially says that the sequence $\{f(i/M)\mid i=1,2,...,M\}$ approximates the scaled approximate passing times $\{\tP_i/M\mid i=1,2,...,M\}$.

\begin{lem}\label{L:var} If $\mF=\{T_n\subset G(n)\mid n\in\N\}$ is a bounded, convergent, neighbor-independent family, then for every $x\in(0,1]$,
$$\lim_{n\ra\infty}\E \left(\frac{P_{\lfloor x M_n\rfloor}(\alpha)}{M_n}\right) = f(x),\text{ and }
\lim_{n\ra\infty}\Var\left(\frac{P_{\lfloor x M_n\rfloor}(\alpha)}{M_n}\right)= 0,$$
where $M_n = n^2-1 = |T_n|$, and $f$ is the power series associated to $\mF$.
\end{lem}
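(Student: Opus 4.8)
The first claimed limit is immediate: Proposition~\ref{P:Approxpasstimes} (which applies because $\mF$ is bounded) says that $\lim_n \E\left(P_{\lfloor xM_n\rfloor}(\alpha)/M_n\right)$ equals $\lim_n \tP_{\lfloor xM_n\rfloor}/M_n$ whenever the latter exists, and Proposition~\ref{P:pi} (which applies because $\mF$ is convergent) identifies that latter limit as $f(x)$. So the real content of the lemma is the variance statement. The plan is to write $P_i(\alpha) = i + |\V_2^i|$ (Equation~\ref{E:V1V2}), note that the deterministic term $i$ contributes nothing to the variance, and expand $\Var(|\V_2^i|)$ using indicator random variables. For each node $v\in\V_2$, let $X_v$ be the indicator of the event $v\in\V_2^i$ — i.e.\ that a uniformly random size-$i$ subset of $\V_1$ hits at least one of the $d_v$ neighbors of $v$. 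Then $|\V_2^i| = \sum_{v\in\V_2} X_v$ and
$$
\Var\!\left(|\V_2^i|\right) = \sum_{v\in\V_2}\Var(X_v) + \sum_{(v,w)\in\Delta_n,\ v\neq w}\mathrm{Cov}(X_v,X_w).
$$
After dividing by $M_n^2$, the first sum is $O(N/M_n^2)=O(1/M_n)\to 0$, so everything rests on bounding the covariance sum.

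The key structural fact is that $X_v$ and $X_w$ are \emph{negatively} correlated whenever the neighbor sets of $v$ and $w$ in $\V_1$ are disjoint: the events ``the random $i$-subset misses all neighbors of $v$'' and ``$\ldots$ misses all neighbors of $w$'' are positively associated for sampling without replacement (this is a standard negative-association property of uniform subsets, or can be checked directly via the hypergeometric ratios as in the proof of Proposition~\ref{P:Pavg}), so $\mathrm{Cov}(1-X_v,1-X_w)\geq 0$, hence $\mathrm{Cov}(X_v,X_w)\leq 0$. Therefore pairs in $\Delta_n\setminus\tilde\Delta_n$ contribute a nonpositive amount, and since $\Var(|\V_2^i|)\geq 0$ automatically, we get the one-sided bound
$$
0 \leq \Var\!\left(|\V_2^i|\right) \leq \sum_{v\in\V_2}\Var(X_v) + \sum_{(v,w)\in\tilde\Delta_n}\bigl|\mathrm{Cov}(X_v,X_w)\bigr|,
$$
where now the second sum ranges only over the pairs with a common neighbor in $\V_1$. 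Bounding $|\mathrm{Cov}(X_v,X_w)|\leq 1$ crudely gives $\Var(|\V_2^i|)/M_n^2 \leq O(1/M_n) + |\tilde\Delta_n|/M_n^2$, which tends to $0$ under the first alternative in the definition of neighbor-connected (Definition~\ref{D:neighborcon}), since $|\Delta_n|\approx M_n^2$. For the second alternative, one needs a sharper per-pair covariance bound: if $\max\{d_v,d_w\}$ is large then $X_v$ (say) is very likely to be $1$ for $i$ of order $xM_n$ — indeed $\Pr(X_v=0)=\binom{M-d_v}{i}/\binom{M}{i}\approx(1-x)^{d_v}$ is exponentially small in $d_v$ — so $\Var(X_v)$, and hence $|\mathrm{Cov}(X_v,X_w)|\leq\sqrt{\Var(X_v)\Var(X_w)}$, is small; averaging this bound over $\tilde\Delta_n$ and invoking $\mathrm{avg}\{\max\{d_v,d_w\}\}\to\infty$ (together with convexity/Jensen to pass from the average of $\max$ to the average of a decreasing function of $\max$) forces the rescaled covariance sum to $0$.

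The main obstacle is this last point: making the second-alternative argument rigorous requires controlling $\mathrm{avg}\{\,\beta^{\max\{d_v,d_w\}} : (v,w)\in\tilde\Delta_n\,\}$ (for some $\beta=\beta(x)<1$) given only that $\mathrm{avg}\{\max\{d_v,d_w\}\}\to\infty$, and a decreasing exponential is not convex-friendly in the direction one wants — a large average of $\max$ does not by itself kill the average of $\beta^{\max}$ if the distribution of $\max$ is very heavy-tailed with most mass near the minimum value $3$. I expect one resolves this by combining the two alternatives more carefully: split $\tilde\Delta_n$ into pairs with $\max\{d_v,d_w\}\leq R$ and pairs with $\max\{d_v,d_w\}> R$; the former contribute at most $\bigl|\{(v,w):\max\leq R\}\bigr|/M_n^2$, and for any fixed $R$ the number of nodes of degree $\leq R$ times their bounded degree controls the number of such adjacent pairs, giving a contribution that is $o(1)$ for each fixed $R$; the latter contribute at most $\beta^R$ times $|\tilde\Delta_n|/M_n^2\leq O(1)$. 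Letting $n\to\infty$ then $R\to\infty$ finishes it. (If instead the paper intends the cleaner reading — that the \emph{average} of $\max$ going to infinity is used together with a uniform-in-the-tail estimate coming from boundedness — then the argument is shorter, but I would present the robust two-alternatives-plus-truncation version to be safe.) Once $\Var(P_{\lfloor xM_n\rfloor}(\alpha)/M_n)\to 0$ is in hand, the lemma is proved.
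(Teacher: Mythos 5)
Your proposal follows essentially the same route as the paper's proof: write $P_i(\alpha)=i+|\V_2^i|$, expand $|\V_2^i|$ as a sum of indicators, observe that pairs $v,w\in\V_2$ with no common neighbor in $\V_1$ have nonpositive covariance (so only $\tilde\Delta_n$ matters), handle the first alternative of neighbor-connectedness with the crude bound $|\mathrm{Cov}|\leq 1$, and handle the second alternative with a covariance bound that decays exponentially in $\max\{d_v,d_w\}$. The paper gets that decay directly, via the Bernoulli approximation $\mathrm{Cov}(Y_v,Y_w)\approx x^{d_v+d_w-d_{vw}}-x^{d_v+d_w}\leq x^{\max\{d_v,d_w\}}$ using $d_{vw}\leq\min\{d_v,d_w\}$, and then uses boundedness plus Markov's inequality to excuse the approximation for the few high-degree pairs; you instead use Cauchy--Schwarz with the hypergeometric tail estimate, which is the same idea in slightly different clothing.

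The obstacle you flag is real, and it is worth noting that the paper's own proof does not resolve it either: the paper asserts that $\frac{|\tilde\Delta_n|}{|\Delta_n|}\cdot\mathrm{avg}\{x^{\max\{d_v,d_w\}}\mid v\sim w\}$ is small ``because $\mF$ is neighbor-connected,'' but under the second alternative Jensen runs the wrong way ($t\mapsto x^t$ is convex), so $\mathrm{avg}\{\max\}\to\infty$ does not by itself force $\mathrm{avg}\{x^{\max}\}\to 0$ when the degree distribution over $\tilde\Delta_n$ is heavy near $3$. Your truncation patch is the right instinct, but the counting step is not justified as stated: a pair with $\max\{d_v,d_w\}\leq R$ only needs a \emph{common neighbor}, not adjacency of $v$ to $w$, and a single branch $u\in\V_1$ can have very large degree, so ``number of degree-$\leq R$ nodes times their bounded degree'' does not control $|\{(v,w)\in\tilde\Delta_n:\max\leq R\}|$. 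What saves it in the grid is geometry: a chord of degree $\leq R$ has fundamental cycle of length $\leq R+1$, hence lies within grid-distance $O(R)$ of any branch it is adjacent to, so each branch has only $O(R^2)$ such low-degree neighbors in $\V_2$; summing $O(R^2)^2$ over the $M_n$ branches bounds the truncated pair count by $O(M_nR^4)=o(M_n^2)$ for fixed $R$, and then $R\to\infty$ finishes the argument. With that count supplied, your version is complete and in fact tightens a step the paper leaves loose.
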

\begin{proof}
The first equation just combines Proposition~\ref{P:Approxpasstimes} and Proposition~\ref{P:pi}.

For the second equation, consider a fixed large value of $n$, and interpret all running notation with respect to the spanning tree $T=T_n$.  Set $i = \lfloor x M\rfloor$.

If $\alpha\in S(T)$ is uniformly selected, then $\V_1^i = \{e_1,...,e_i\}\subset\V_1$ is a random sample from $\V_1$.  Since $P_i(\alpha)=i + |\V_2^i|$, it will suffice to bound the variance of $\frac{|\V_2^i|}{M}$.  For this, we write
$$|\V_2^i| = \sum_{v\in\V_2} Y_v,$$
where $Y_v$ is the indicator random variable; that is, $Y_v\in\{0,1\}$ depending on whether $v\in\V_2^i$.

Notice that $\text{Cov}(Y_v,Y_w)\leq 1$ for all $v,w\in\V_2$.  Furthermore, if $v$ and $w$ have no common neighbors, then $\text{Cov}(Y_v,Y_w)\leq 0$; that is, the knowledge that $Y_v=1$ makes it slightly less likely that $Y_w=1$.

Like in Definition~\ref{D:neighborcon}, define $\Delta=\V_2\times\V_2$ and let $\tilde\Delta$ be the set of all $(v,w)\in\Delta$ such that $v\neq w$ and $v,w$ have at least one common neighbor in $\V_1$.  
\begin{align*}
\text{Var}\left( |\V_2^i|\right) 
  &= \sum_{v,w\in\V_2}\text{Cov}(Y_v,Y_w)
  \leq \sum_{v\in\V_2} \text{Var}(Y_v) + \sum_{(v,w)\in\tilde{\Delta}}\text{Cov}(Y_v,Y_w) 
  \leq |\V_2|+|\tilde{\Delta}|.
\end{align*}
Since $\mF$ is neighbor-independent, we have
\begin{align*}
\text{Var}\left( \frac{|\V_2^i|}{M}\right)
  & = \frac{\text{Var}\left( |\V_2^i|\right)}{M^2}\approx 0
\end{align*}
\end{proof}

\section{The geometric mean of the scaled passing times}
In this section, we define and study the following measurement, which is the geometric mean of the scaled passing times.
\begin{definition}\label{D:A} For $T\subset\mathcal{T}(G(n))$,
$$ \mA(T) = \frac{1}{M}\left(\prod P_i(\alpha) \right)^{\frac 1 M} = \left(\prod \frac{P_i(\alpha)}{M} \right)^{\frac 1 M},$$
which is a random variable with respect to a uniform choice of $\alpha\in S(T)$.
\end{definition}

The support of $\mA(T)$ lies in $[0,2]$ because $P_i(\alpha)/M\in[0,2]$ for each $i$ and each $\alpha$.

\begin{prop}\label{P:dodo} If $\mF=\{T_n\subset G(n)\mid n\in\N\}$ is a family of spanning trees, then
$$Q^-(\mF) \geq \frac{1}{e\cdot\liminf_{n\ra\infty}\left(\E \left(\mA(T_n)\right)\right)}.$$
In other words,
$$\PMST(T_n)>\sim\left( e\cdot\E(\mA(T_n))\right)^{-n^2}.$$
\end{prop}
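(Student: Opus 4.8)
The plan is to rewrite $\PMST(T_n)$ in terms of $\E\bigl(\mA(T_n)^{-M}\bigr)$ via the Lyons--Peres formula, apply Jensen's inequality in the direction that produces a \emph{lower} bound, and then read off the asymptotics. Write $M=M_n=n^2-1=|T_n|$. By Definition~\ref{D:A} we have $\prod_{i=1}^{M}P_i(\alpha)=M^{M}\,\mA(T_n)^{M}$, so Proposition~\ref{P:russ} gives
$$\PMST(T_n)=M!\cdot\E\!\left(\frac{1}{\prod_{i=1}^{M}P_i(\alpha)}\right)=\frac{M!}{M^{M}}\,\E\!\left(\mA(T_n)^{-M}\right),$$
where the expectation is the uniform average over the finite set $S(T_n)$ of orderings, and each term is a finite positive real number because $P_i(\alpha)\ge i\ge 1$ forces $\mA(T_n)>0$ for every $\alpha$.

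Next I would invoke two elementary facts. First, $t\mapsto t^{-M}$ is convex on $(0,\infty)$, so Jensen's inequality for the uniform distribution on $S(T_n)$ yields $\E\bigl(\mA(T_n)^{-M}\bigr)\ge\bigl(\E(\mA(T_n))\bigr)^{-M}$ --- this is the step whose orientation matters, and since $t\mapsto t^{-M}$ is convex the average of $\mA(T_n)^{-M}$ is at \emph{least} $\bigl(\E(\mA(T_n))\bigr)^{-M}$, which is what makes $\PMST(T_n)$ large. Second, $M!\ge (M/e)^{M}$, immediate from $e^{M}=\sum_{k\ge 0}M^{k}/k!\ge M^{M}/M!$, so $M!/M^{M}\ge e^{-M}$. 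Combining these,
$$\PMST(T_n)\ \ge\ \frac{M!}{M^{M}}\bigl(\E(\mA(T_n))\bigr)^{-M}\ \ge\ \bigl(e\cdot\E(\mA(T_n))\bigr)^{-M}.$$

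To conclude, I would take logarithms, divide by $n^{2}$, and use $M/n^{2}\to 1$ together with the fact that $\E(\mA(T_n))$ is bounded away from $0$ and from $\infty$: the upper bound $\E(\mA(T_n))\le 2$ holds because $P_i(\alpha)\le M+N<2M$ forces $\mA(T_n)<2$, and the lower bound holds because $P_i(\alpha)\ge i$ forces $\mA(T_n)\ge (M!)^{1/M}/M\to 1/e$. Hence $\ln\bigl(e\cdot\E(\mA(T_n))\bigr)$ is bounded, and
$$\frac{\ln\PMST(T_n)}{n^{2}}\ \ge\ -\frac{M}{n^{2}}\,\ln\!\bigl(e\cdot\E(\mA(T_n))\bigr)\ =\ -\ln\!\bigl(e\cdot\E(\mA(T_n))\bigr)+o(1),$$
which is precisely the ``in other words'' statement $\PMST(T_n)\geq\sim\bigl(e\cdot\E(\mA(T_n))\bigr)^{-n^{2}}$; taking $\liminf$ over $n$ then produces the stated bound on $Q(\mF)$ (in the convergent families of interest $\E(\mA(T_n))$ has a genuine limit, so there is no gap between the $\liminf$ and $\limsup$ versions).

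I do not expect a serious obstacle here: the deduction is short once the Lyons--Peres formula is in hand. The one place requiring genuine care is keeping Jensen's inequality pointed the right way --- lower-bounding $\E(\mA^{-M})$, not upper-bounding it --- and confirming that $\mA(T_n)>0$ almost surely so that $\mA(T_n)^{-M}$ is well defined with finite expectation. The remaining work (the elementary bound on $M!$, the boundedness of $\E(\mA(T_n))$, and the harmless discrepancy between $M=n^{2}-1$ and $n^{2}$) is routine bookkeeping.
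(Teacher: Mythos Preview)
Your proposal is correct and follows essentially the same route as the paper: rewrite $\PMST(T_n)$ via Lyons--Peres as $\dfrac{M!}{M^M}\,\E\bigl(\mA(T_n)^{-M}\bigr)$, apply Jensen's inequality to the convex map $t\mapsto t^{-M}$, and replace $M!/M^M$ by $e^{-M}$. The only cosmetic difference is that the paper invokes Stirling's approximation $M!\sim(M/e)^M$ while you use the exact inequality $M!\ge(M/e)^M$, and you spell out the boundedness of $\E(\mA(T_n))$ and the $M$ versus $n^2$ bookkeeping that the paper leaves implicit.
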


\begin{proof}
Consider a fixed large value of $n$, and interpret all running notation with respect  to the spanning tree $T=T_n$.  Proposition~\ref{P:russ} together with Stirling's approximation and Jensen's inequality yields:
\begin{align*}
\PMST(T)
    & = M!\cdot \E\left( \frac{1}{\prod P_i(\alpha)}\right)
             \approx \left(\frac{M}{e}\right)^M \cdot \E\left( \frac{1}{\prod P_i}\right) \\
    & =  e^{-M}\cdot \E\left( \frac{1}{\prod\frac{P_i}{M}}\right)
      = e^{-M}\cdot\E\left(\mA(T)^{-M}\right)\\
    & \geq e^{-M}\cdot\left(\E(\mA(T))\right)^{-M}\sim\left( e\cdot\E(\mA(T))\right)^{-n^2}.
\end{align*}
\end{proof}

To interpret the following proposition, recall that the \emph{geometric mean} of a function $f:[0,1]\ra\R^+$ is defined as
$$\overline{f} = \exp\left(\int_0^1 \ln f(x)\,\text{dx} \right).$$  This is because a Riemann sum for $\int_0^1 \ln f(x)\,\text{dx}$ is the log of the geometric mean of the sample values.

\begin{prop}\label{P:lulu}
If $\mF=\{T_n\subset G(n)\mid n\in\N\}$ is a bounded, convergent, neighbor-independent family, and $f$ is its associated power series, then
$$ \lim_{n\ra\infty} \E\left(\mA(T_n)\right)= \overline{f}.$$
\end{prop}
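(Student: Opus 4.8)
The plan is to pass to logarithms and treat $\ln\mA(T_n)=\frac1M\sum_{i=1}^M\ln\frac{P_i(\alpha)}{M}$ (with $M=M_n=n^2-1$) as a \emph{random Riemann sum} for $\int_0^1\ln f(x)\,\text{dx}=\ln\overline f$. First I would record a deterministic two-sided bound: by Equation~\ref{E:V1V2}, $P_i(\alpha)=i+|\V_2^i|\in[i,2M]$ for every order $\alpha$, so $\ln\frac iM\le\ln\frac{P_i(\alpha)}{M}\le\ln 2$, and summing over $i$ and applying Stirling to $M!/M^M$ gives that $\ln\mA(T_n)$ is uniformly bounded (contained in $[-1,\ln 2]$ once $n$ is large). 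Hence $\mA(T_n)$ takes values in a fixed compact subset of $(0,\infty)$ for all large $n$, and since $\ln$ is bi-Lipschitz there, it suffices to prove $\ln\mA(T_n)\to\ln\overline f$ in probability; the bounded convergence theorem then upgrades this to $\E(\mA(T_n))\to\overline f$.

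For the convergence in probability I would establish $\E(\ln\mA(T_n))\to\ln\overline f$ together with $\Var(\ln\mA(T_n))\to 0$, and conclude by Chebyshev. Both reduce to the same estimates, obtained by splitting the index set at $i=\lfloor\delta M\rfloor$ for a small parameter $\delta$. For the \emph{bulk} indices $i\ge\delta M$, the values $\frac{P_i(\alpha)}{M}$ and $f(i/M)$ all lie in $[\delta,2]$ (using $x\le f(x)\le 2$, which follows from $\sum p_\infty(d)\le 1$), where $\ln$ is Lipschitz, so Lemma~\ref{L:var} (which supplies $\E(P_{\lfloor xM\rfloor}/M)\to f(x)$ and $\Var(P_{\lfloor xM\rfloor}/M)\to 0$ for each fixed $x$) yields $\E(\ln\tfrac{P_{\lfloor xM\rfloor}}{M})\to\ln f(x)$ and $\Var(\ln\tfrac{P_{\lfloor xM\rfloor}}{M})\to 0$ for each $x$; viewing $x\mapsto\E(\ln\tfrac{P_{\lfloor xM\rfloor}}{M})$ and $x\mapsto\sqrt{\Var(\ln\tfrac{P_{\lfloor xM\rfloor}}{M})}$ as uniformly bounded step functions on $[\delta,1]$ and invoking bounded convergence gives $\frac1M\sum_{i\ge\delta M}\E(\ln\tfrac{P_i}{M})\to\int_\delta^1\ln f(x)\,\text{dx}$ and $\frac1M\sum_{i\ge\delta M}\sqrt{\Var(\ln\tfrac{P_i}{M})}\to 0$. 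For the \emph{early} indices $i<\delta M$ I would not attempt convergence, only an aggregate bound: from $\ln\frac iM\le\ln\frac{P_i}{M}\le\ln 2$, $x\le f(x)\le 2$, and Stirling applied to $\lfloor\delta M\rfloor!$, each of $\E(\frac1M\sum_{i<\delta M}|\ln\tfrac{P_i}{M}|)$, $\int_0^\delta|\ln f(x)|\,\text{dx}$, and $\frac1M\sum_{i<\delta M}\sqrt{\Var(\ln\tfrac{P_i}{M})}$ is at most $\eta(\delta)+o_n(1)$ for some $\eta$ with $\eta(\delta)\to 0$ as $\delta\to 0$ (the crucial cancellation being $\delta\ln\delta\to 0$). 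Combining the two ranges, letting $n\to\infty$ and then $\delta\to 0$, gives $\E(\ln\mA(T_n))\to\ln\overline f$; and the Cauchy--Schwarz bound $\Var(\frac1M\sum_i X_i)\le\big(\frac1M\sum_i\sqrt{\Var(X_i)}\big)^2$ applied to $X_i=\ln\frac{P_i}{M}$ converts the two $\sqrt{\Var}$-estimates into $\Var(\ln\mA(T_n))\to 0$.

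I would also note that $\liminf_n\E(\mA(T_n))\ge\overline f$ drops out immediately from $\E(\ln\mA(T_n))\to\ln\overline f$ by Jensen's inequality for the convex function $\exp$, so the real content is the matching upper bound — which is precisely what forces one to prove the random Riemann sum self-averages.

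The main obstacle I expect is this variance (self-averaging) estimate. Lemma~\ref{L:var} controls the fluctuation of a single scaled passing time, but the summands $\ln\frac{P_i(\alpha)}{M}$ for different $i$ are heavily correlated through their shared dependence on the one random order $\alpha$, so variances cannot simply be added; the Cauchy--Schwarz reduction to a sum of $\sqrt{\Var}$'s is what makes it go through, at the price of needing $\frac1M\sum_i\sqrt{\Var(\ln\tfrac{P_i}{M})}$ (not just $\frac1M\sum_i\Var(\ln\tfrac{P_i}{M})$) to be small, which is why the small-$i$ regime, where $\ln$ is unbounded and Lemma~\ref{L:var} is useless, must be handled by the crude deterministic bound $\ln\frac iM\le\ln\frac{P_i(\alpha)}{M}$. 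A secondary bookkeeping point is making the fixed-$x$ content of Lemma~\ref{L:var} interact with Riemann sums uniformly in $i$; writing the partial sums as integrals of step functions and applying bounded convergence disposes of this cleanly.
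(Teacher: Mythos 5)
Your proposal is correct, but it takes a genuinely different route from the paper's. The paper's proof is pathwise: using Lemma~\ref{L:var} at finitely many grid points $x_j$, together with the monotonicity of the piecewise-linear profile $g_n^\alpha$ and a Lipschitz bound on $f$, it shows that with probability at least $1-\epsilon$ the entire graph of $g_n^\alpha$ lies in an $\epsilon$-tube around $f$ on $[\epsilon,1]$ (Equation~\ref{ear}); it then bounds $\ln\mA(T_n)(\alpha)$ on that event, handling the indices $i<\epsilon M$ by the deterministic bounds $P_i/M\le 2$ and (for the matching lower bound) $P_i\ge i$, i.e.\ Equation~\ref{E:helloworld}, and disposes of the exceptional event via $\mA\le 2$. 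You never prove uniform closeness of the path; instead you work with first and second moments of the individual terms $\ln(P_i(\alpha)/M)$: Lemma~\ref{L:var} plus Lipschitzness of $\ln$ away from $0$ gives, for each fixed $x$, convergence of $\E\bigl(\ln(P_{\lfloor xM\rfloor}/M)\bigr)$ to $\ln f(x)$ and of its variance to $0$; bounded convergence of step functions converts this into convergence of the Riemann sums over $i\ge\delta M$; the singular range $i<\delta M$ is controlled by the same deterministic bounds $i\le P_i\le 2M$ and $x\le f(x)\le 2$ with the $\delta\ln\delta$ cancellation; and the subadditivity of standard deviations (your Cauchy--Schwarz step, which is indeed the right fix for the strong correlations among the $P_i$ under a single random order) gives $\Var(\ln\mA(T_n))\to 0$, after which Chebyshev and the uniform bound $\mA(T_n)\in[e^{-1},2]$ yield convergence of the expectations. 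The ingredients are shared (Lemma~\ref{L:var}, the bounds $i\le P_i\le 2M$, $x\le f\le 2$, boundedness of $\mA$), but the mechanisms differ: the paper's argument establishes the stronger and reusable statement that the scaled passing-time profile concentrates uniformly around $f$, which substantiates the heuristic picture following Theorem~\ref{T_pre}, while your argument is leaner (no grid/union-bound step) and produces as a by-product the concentration $\Var(\ln\mA(T_n))\to 0$, precisely the quantity $\sigma_n^2$ in the paper's closing conjecture (though not at the rate conjectured there). One shared caveat: like the paper's own proof, you invoke Lemma~\ref{L:var}, whose hypotheses include neighbor-connectedness, which is absent from the statement of Proposition~\ref{P:lulu} as written; your proof needs that hypothesis (or a variance estimate proved without it) exactly as the paper's does.
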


\begin{proof}
For each $n\in\N$ and each $\alpha\in S(T_n)$, let $g_n^\alpha:[0,1]\ra[0,2]$ denote the piecewise linear function whose graph connects the following points:
$$\left\{\left(0,0\right),\left(\frac 1 M,\frac{P_1(\alpha)}{M}\right),
\left(\frac 2 M,\frac{P_2(\alpha)}{M}\right),...,\left(1,\frac{P_M(\alpha)}{M}\right)\right\},$$
where $M=|T_n|$ as usual.

Notice that for every $x\in[0,1]$,
\begin{equation}\label{E:helloworld}
g_n^\alpha(x)\geq x
\end{equation}

We claim that for every $\epsilon>0$ there exists $n_0\in\N$ such that if $n>n_0$ then
\begin{equation}\label{ear}
\max\{|g_n^\alpha(x)-f(x)| \mid \epsilon\leq x\leq 1\} <\epsilon \text{ with probability } >1-\epsilon,
\end{equation}
where ``probability'' means with respect to a uniform choice of $\alpha\in S(T_n)$.
That is, with high probability, the graph of $g_n^\alpha$ lies within a small tubular neighborhood of the graph of $f$ over most of its domain.

To prove this, let $\epsilon>0$.  Define $C=\max\{f'(x)\mid 0\leq x\leq 1\}$ and choose $L\in\N$ with $L>\frac{5C}{\epsilon}$.  For each $j\in\{1,2,...,L\}$ define $x_j = \frac{j}{L}$, so that the points $\{x_j\}$ are equally spaced from $0$ to $1$ with gaps $<\frac{\epsilon}{5C}$.

For a fixed $j$, applying Lemma~\ref{L:var} to $x_j$ establishes that there exists $n_0\in\N$ such that for all $n>n_0$,
\begin{equation}\label{fish}
|g_n^\alpha(x_j)-f(x_j)|<\frac{\epsilon}{5} \text{ with probability }>1-\epsilon.
\end{equation}
In fact, there exists $n_0\in\N$ such that for all $n>n_0$, Equation~\ref{fish} is true \emph{for all} $j\in\{1,2,...,L\}$.

But if $|g_n^\alpha(x_j)-f(x_j)|<\frac{\epsilon}{5}$ for all $j\in\{1,2,...,L\}$, then we claim that $|g_n^\alpha(x)-f(x)|<\epsilon$ for all $x\in[\epsilon,1]$.

To see this, let $x\in[\epsilon,1]$.  Since $x\geq \epsilon$, we know $x>x_1$, so it is possible to choose an index $j$ such that $x_j<x<x_{j+1}$.  By the triangle inequality,
\begin{align*}
|g_n^\alpha(x)-f(x)|
  & \leq |f(x)-f(x_j)| + |f(x_j)-g_n^\alpha(x_j)| + |g_n^\alpha(x_j)-g_n^\alpha(x)| \\
  & \leq \frac{\epsilon}{5}+\frac{\epsilon}{5}+\frac{3\epsilon}{5}=\epsilon.
\end{align*}
To justify the claim here that $|g_n^\alpha(x_j)-g_n^\alpha(x)|<\frac{3\epsilon}{5}$, we use the fact that $g_n^\alpha$ is increasing, so
\begin{align*}
|g_n^\alpha(x_j)-g_n^\alpha(x)| & \leq |g_n^\alpha(x_j)-g_n^\alpha(x_{j+1})| \\
              & \leq |g_n^\alpha(x_j)-f(x_j)| + |f(x_j)-f(x_{j+1})| + |f(x_{j+1})-g_n^\alpha(x_{j+1})|\\
              & \leq \frac{\epsilon}{5}+\frac{\epsilon}{5}+\frac{\epsilon}{5}=\frac{3\epsilon}{5}.
\end{align*}
This completes the proof of Equation~\ref{ear}.

In summary, for any $\epsilon>0$, we can choose $n$ sufficiently large so that the probability is $\geq 1-\epsilon$ that the graph of $g_n^\alpha$ lies within an $\epsilon$ tubular neighborhood of the graph of $f$ on $[\epsilon,1]$.  If this event occurs then we have the following.
\begin{align*}
\ln\left(\mA(T_n)(\alpha)\right)
  & = \frac{1}{M}\sum \ln\left(\frac{P_i(\alpha)}{M}\right)\\
  & = \frac{1}{M}\sum_{i<\epsilon M} \ln\left(\frac{P_i(\alpha)}{M}\right)
      +\frac{1}{M}\sum_{i\geq\epsilon M} \ln\left(\frac{P_i(\alpha)}{M}\right)\\
  & \leq \epsilon\ln(2) + \int_\epsilon^1\ln(f(x)+\epsilon)\,\text{dx}\\
  & = \epsilon\ln(2) + \epsilon(1-\epsilon) + \int_\epsilon^1\ln(f(x))\,\text{dx}\\
  & \leq \epsilon\ln(2) + \epsilon(1-\epsilon) + \int_0^1\ln(f(x))\,\text{dx} - \int_0^\epsilon \ln(x)\,\text{dx} \\
  & = \underbrace{\epsilon(\ln(2)-\ln(\epsilon)) -\epsilon^2}_{\text{denoted }\psi(\epsilon)} + \int_0^1\ln(f(x))\,\text{dx},
\end{align*}
The first inequality above uses the fact that each $\frac{P_i(\alpha)}{M}\leq 2$, while the second uses the fact that $f(x)\geq x$.

We therefore have the following.
$$\mA(T_n)(\alpha)\leq e^{\psi(\epsilon)}\cdot\overline{f}.$$
Since this occurs with probability $\geq (1-\epsilon)$, and since the support of $\mA$ is contained in $[0,2]$,
\begin{align*}
\E(\mA(T_n))
  & \leq \epsilon\cdot 2 + (1-\epsilon)\cdot e^{\psi(\epsilon)}\cdot\overline{f}
\end{align*}
Since $\lim_{\epsilon\ra 0}\psi(\epsilon)=0$, this suffices to prove that
$$\lim_{n\ra\infty} \E\left(\mA(T_n)\right)\leq \overline{f}.$$
The corresponding lower bound is proven analogously, which requires Equation~\ref{E:helloworld} and the fact that $f(x)\leq 2$.

\end{proof}

Combining Proposition~\ref{P:dodo} and Proposition~\ref{P:lulu} proves Theorem~\ref{T_pre}, which we restate as follows.
\begin{prop}\label{P:punchline}
If $\mF=\{T_n\subset G(n)\mid n\in\N\}$ is a bounded, convergent, neighbor-independent family, and $f$ is its associated power series, then
$$Q^-(\mF) \geq \frac{1}{e\cdot\overline{f}}.$$
In other words,
$$\PMST(T_n)>\sim\left( e\cdot\overline{f}\right)^{-n^2}.$$
\end{prop}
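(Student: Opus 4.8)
The plan is to derive Proposition~\ref{P:punchline} simply by tying together Proposition~\ref{P:dodo} and Proposition~\ref{P:lulu}; all of the analytic work has already been done in Sections 8 and 9, and this statement is just the place where the two threads meet. First I would invoke Proposition~\ref{P:dodo}, which holds for an arbitrary family of spanning trees and hence in particular for our bounded, convergent, neighbor-connected family $\mF$, to get
$$Q(\mF)\geq\frac{1}{e\cdot\liminf_{n\ra\infty}\bigl(\E(\mA(T_n))\bigr)}.$$
Then I would invoke Proposition~\ref{P:lulu}, whose hypotheses ``bounded'' and ``convergent'' are satisfied, to conclude that the sequence $\E(\mA(T_n))$ in fact converges, with limit $\overline{f}$. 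Since that limit exists, the $\liminf$ in the displayed bound may be replaced by $\overline{f}$, giving $Q(\mF)\geq\frac{1}{e\cdot\overline{f}}$, which is the claim.

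Two minor points would be worth checking as the argument is written out. First, one should note that $\overline{f}$ is a well-defined positive real number so that the bound is meaningful: from the remarks preceding Proposition~\ref{P:pi} we have $f'(x)\geq 1$ and $f(1)=2$, hence $x\leq f(x)\leq 2$ on $[0,1]$, so $\ln f$ is bounded above by $\ln 2$ and below by the integrable function $\ln x$; therefore $\int_0^1\ln f(x)\,\text{dx}$ converges and $\overline{f}\in(0,2]$. Second, the ``neighbor-connected'' hypothesis does not enter this final assembly directly, but it is required inside the proof of Proposition~\ref{P:lulu} (via Lemma~\ref{L:var}, which controls $\Var(P_{\lfloor xM\rfloor}(\alpha)/M)$), so it has to be carried along in the statement.

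Finally I would restate the conclusion in asymptotic form: by the definition of the decay base given after Theorem~\ref{T:bound2}, the inequality $Q(\mF)\geq\frac{1}{e\cdot\overline{f}}$ is exactly the assertion $\liminf\frac{\ln\PMST(T_n)}{n^2}\geq\ln\frac{1}{e\cdot\overline{f}}$, i.e. $\PMST(T_n)\geq\sim\bigl(e\cdot\overline{f}\bigr)^{-n^2}$. I do not expect any real obstacle at this stage; the substance lies entirely in Proposition~\ref{P:dodo} (Stirling plus Jensen applied to the Lyons--Peres formula) and in Proposition~\ref{P:lulu} (the tubular-neighborhood argument showing the scaled passing-time sequence converges to the graph of $f$ and hence $\E(\mA(T_n))\to\overline{f}$), both of which may be assumed here.
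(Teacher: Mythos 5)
Your proposal is correct and matches the paper exactly: the paper derives Proposition~\ref{P:punchline} precisely by combining Proposition~\ref{P:dodo} with Proposition~\ref{P:lulu}, replacing the $\liminf$ by the limit $\overline{f}$. Your added checks (that $\overline{f}$ is well defined and that neighbor-connectedness enters only through Lemma~\ref{L:var} inside Proposition~\ref{P:lulu}) are sensible but not part of the paper's one-line argument.
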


\section{Example families of spanning trees}
In this section, we study some families of spanning trees, including the families pictured in Figure~\ref{F:families}.

\begin{figure}[ht!]\centering
   \scalebox{.9}{\includegraphics{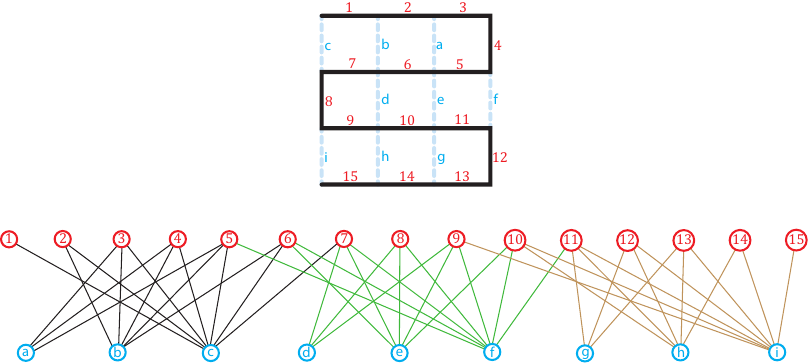}}
\caption{The bipartite graph of the centipede $T_4\subset G(4)$.}\label{F:cc}
   \end{figure}

\begin{example}[The Centipede Family]
We first study the centipede family $\mF = \{T_n\subset G(n)\mid n\geq 3\}$.  The bipartite graph of $T_4$ is shown in Figure~\ref{F:cc}. The degree-mass function $p_n$ of the centipede $T_n$ is particularly simple:
\begin{equation}\label{E:cent_deg} p_n(d) = \frac{1}{n-1} \text{ for each }d\in\{3,5,...,2n-1\}.
\end{equation}
Therefore for each $d\geq 3$, $p_\infty(d)=\lim_{n\ra\infty}p_n(d)=0$, so $\mF$ is convergent and its power series is
$$f(x) = 1+x.$$
This power series encodes the fact that as $n\ra\infty$, the approximate passing times for $T_n$ look more and more like the extreme case considered in the proof of the lower bound of Theorem~\ref{T:bound2}; namely $\{\tilde{P}_i\}\approx\{N,N+1,...,N+M\}$.

The centipede family is bounded because Equation~\ref{E:cent_deg} says that the maximum degree in $\V_2$ equals $2n-1$, which becomes arbitrarily small relative to $n^2$.  The centipede family is also neighbor-independent, as can be seen from the structure of the bipartite graph exemplified in Figure~\ref{F:cc}.  A pair of nodes of $\V_2$ share a neighbor if and only if the corresponding chords are in the same or adjacent rows.

Proposition~\ref{P:punchline} only provides the following old information.
$$ Q^-(\mF)\geq \frac{1}{e\cdot \overline{f}} = \frac{1}{4}.$$
We do not know whether $Q(\mF)=\frac 1 4$.
\end{example}

\begin{example}[The Double Spiral Family]
The double spiral family  in Figure~\ref{F:families} is convergent and has the same power series as the centipede, namely $f(x) = 1+x$.  This family is not bounded because the average degree grows proportional to $M$.  In fact, the family is designed to have high average degree growth, and hence be a good candidate for having the minimum possible decay base.
\end{example}

\begin{example}[The Fractal Family]
We next study the fractal family $\mF = \{F_k\subset G(2^k)\mid k=1,2,...\}$.  It is bounded because the average degree of nodes in the bipartite graph of $F_k\subset G(2^k)$ grows logarithmically in $n=2^k$, as shown in~\cite{Alon}.

Let $p_k$ denote the degree-mass function for $F_k$.  Unlike the mass functions for the centipede family, these mass functions converge to non-zero limits.

\begin{lem} For each $d\geq 3$, the sequence $\{p_k(d)\}$ converges to a limit, which we denote as $p_\infty(d)$, and $\sum_{k\geq 3} p_\infty(k) = 1$.
\end{lem}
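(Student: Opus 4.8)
The plan is to set up a recursion for the degree-mass data of $F_k$ based on the self-similar structure described in the introduction: each quadrant of $F_{k+1}$ is a copy of $F_k$, and these four copies are joined in the middle by a copy of $F_1$. First I would record the relevant sizes. The tree $F_k\subset G(2^k)$ has $N_k = (2^k-1)^2$ chords, and I would verify $N_{k+1} = 4N_k + c$ for an explicit small constant $c$ coming from the ``middle'' $F_1$ gadget and the new boundary edges created when the quadrants are glued. The key structural observation is that when one forms $F_{k+1}$ from four copies of $F_k$, the cycle associated to a chord lying deep inside one of the quadrant-copies is \emph{unchanged}, so that chord has the same degree in $\G_{k+1}$ as in $\G_k$; only the $\mathcal{O}(1)$ (or more precisely, $\mathcal{O}(2^k)$-many, i.e.\ boundary-sized) chords near the gluing locus have their cycles lengthened. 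Writing $n_k(d)$ for the \emph{number} of degree-$d$ nodes in $\V_2$ for $F_k$, this gives
\begin{equation*}
n_{k+1}(d) = 4\,n_k(d) + r_k(d),
\end{equation*}
where $r_k(d)$ is supported on $\mathcal{O}(2^k)$ chords total, i.e.\ $\sum_d r_k(d) = \mathcal{O}(2^k)$.

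Dividing by $N_{k+1}\approx 4N_k$ turns this into $p_{k+1}(d) \approx p_k(d) + r_k(d)/N_{k+1}$, and since $N_k$ grows like $4^k$ while $\sum_d |r_k(d)| = \mathcal{O}(2^k)$, the perturbation terms form an absolutely summable series in $k$. Hence for each fixed $d$ the sequence $\{p_k(d)\}$ is Cauchy and converges to a limit $p_\infty(d)$; this is the first assertion. I would be a little careful here to get a \emph{uniform-in-$d$} bound on $\sum_d|p_{k+1}(d)-p_k(d)|$ rather than just pointwise convergence, because that is exactly what is needed for the second assertion.

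For the claim $\sum_{d\geq 3} p_\infty(d) = 1$, the point is that $\sum_{d} p_k(d) = 1$ for every $k$ (the degree-mass function is a probability distribution on the finitely many chords), and one wants to pass this to the limit, i.e.\ rule out escape of mass to $d=\infty$. Equivalently, one needs: for every $\varepsilon>0$ there is a $D$ such that $\sum_{d\geq D} p_k(d) < \varepsilon$ \emph{uniformly in $k$} (a tightness statement). This follows from boundedness of the fractal family together with a stronger fact already cited from \cite{Alon}, namely that the average degree in $\V_2$ for $F_k$ grows only like $C\ln(2^k) = Ck\ln 2$; but that alone only gives a $k$-dependent Markov bound. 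The cleaner route is to use the recursion directly: the maximum degree appearing in $F_{k+1}$ is at most (max degree in $F_k$) plus an $\mathcal{O}(1)$ increment from the middle gadget, so $\max\{d : p_k(d)>0\}$ grows only linearly in $k$, and moreover the \emph{number} of chords whose degree exceeds the max degree of $F_k$ is only $\mathcal{O}(2^k) = \mathcal{O}(\sqrt{N_{k+1}})$. Iterating, the total mass on chords of degree $\geq D$ across all scales telescopes to something $\mathcal{O}(\sum_{j \geq j(D)} 2^j / 4^j)$, which is small once $D$ is large, uniformly in $k$. Combining uniform tightness with pointwise convergence gives $\sum_d p_\infty(d) = \lim_k \sum_d p_k(d) = 1$.

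I expect the main obstacle to be making the recursion $n_{k+1}(d) = 4n_k(d) + r_k(d)$ genuinely precise: one must pin down exactly which chords of $F_{k+1}$ fail to be ``deep interior'' copies of chords of $F_k$ (those incident to the central $F_1$ gadget, and those whose fundamental cycle is forced to route through the newly created gluing branches), confirm that there are only $\mathcal{O}(2^k)$ of them, and bound how much their degrees can change. This is essentially a careful bookkeeping argument about the explicit fractal construction of \cite{Alon} rather than a conceptual difficulty; once the recursion and the $\mathcal{O}(2^k)$ bound on $\sum_d|r_k(d)|$ are in hand, both convergence and the normalization $\sum p_\infty(d)=1$ fall out of the geometric decay of $2^k/4^k$.
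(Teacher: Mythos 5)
Your strategy is essentially the paper's: exploit the self-similar structure to get a recursion $n_{k+1}(d)=4n_k(d)+r_k(d)$ with boundary-sized corrections, deduce convergence of $p_k(d)$, and then prove a uniform tightness (no escape of mass to $d=\infty$) statement to pass $\sum_d p_k(d)=1$ to the limit. The main difference is one of sharpness: the paper argues that for each fixed $d$ the number of seam ("cut") chords of degree $d$ is \emph{eventually constant} in $k$, so the recursion becomes exactly $q_{k+1}(d)=4q_k(d)+C$ for $k\geq k_0$; solving it gives a closed-form value of $p_\infty(d)$, which is what the paper needs later to tabulate $p_\infty(d)$ and evaluate $\overline{f}$. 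Your softer summable-perturbation/Cauchy argument proves existence of the limits (which is all the lemma asserts) but yields no exact values. Your tightness argument via telescoping $\sum_j 2^j/4^j$ is actually spelled out more explicitly than the paper's one-line "the convergence is uniform."

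Two of your side claims are factually wrong about this construction, though neither is load-bearing. First, $N_{k+1}=4N_k+c$ with $c$ constant is false: $N_k=(2^k-1)^2$ gives $N_{k+1}-4N_k=2^{k+2}-3$, i.e.\ the correction is itself of size $\mathcal{O}(2^k)$ (these are exactly the seam chords); your argument only needs $N_{k+1}=4N_k(1+\mathcal{O}(2^{-k}))$, so nothing breaks. Second, and more seriously as a statement about the fractal, the maximum degree does \emph{not} grow by $\mathcal{O}(1)$ per generation (hence not linearly in $k$): a seam chord near the outer boundary of $G(2^{k+1})$ has a fundamental cycle forced to route through the central gluing gadget, of length comparable to $2^k$, so the maximum degree roughly doubles each generation (compare the paper's table: $3,5\ra 11,13\ra 25\text{--}29\ra 55\text{--}61\ra 117\text{--}125$). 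Your tightness conclusion survives because the telescoping only uses (i) each $F_j$ has finite maximum degree and (ii) chords of $F_{k+1}$ whose degree exceeds $\max\deg(F_k)$ must be seam chords, of which there are only $\mathcal{O}(2^k)$, contributing relative mass $\mathcal{O}(2^{-k})$ after the $4^{k-j}$-fold replication; but you should drop the linear-growth claim and let $j(D)$ be defined simply as the largest $j$ with $\max\deg(F_j)<D$, which tends to infinity with $D$ without any rate.
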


\begin{proof}
Let $q_k(d)$ denote the \emph{number} of degree $d$ nodes in $\V_2$, so
\begin{equation}\label{E:recur}
p_k(d) = \frac{q_k(d)}{|\V_2|} = \frac{q_k(d)}{2^{2k}-2^{k+1}+1}.
\end{equation}

Fix a value $d\geq 3$.  Assume it's not the case that $q_k(d)=0$ for all $k$.  We claim there exists an index $k_0$ beyond which $\{q_k(d)\}$ satisfies a simple first-order linear recurrence relation with base $4$; that is:
$$q_{k+1}(d) = 4\cdot q_k(d) + C\,\,\,\text{ for all } k\geq k_0.$$
The solution is:
$$q_{k_0 + t} = \left(A+\frac{C}{3}\right)4^t - \frac{C}{3}\,\,\,\text{ for all } t\geq 0,$$
where $A = q_{k_0}(d)$.
Therefore Equation~\ref{E:recur} gives:
\begin{equation}\label{E:limp}\lim_{t\ra\infty}p_k(d) = \frac{A+\frac{C}{3}}{4^{k_0}}.
\end{equation}

To justify the recurrence relation, notice that each branch of $F_{k+1}$ either belongs to one of the four copies of $F_k$ out of which $F_{k+1}$ is built, or is a ``cut branch'' that connects two of these quadrants.  Thus,
$$q_{k+1}(d) = 4\cdot q_k(d) + C_{k+1}(d),$$
where $C_{k+1}(d)$ is the number of cut branches with degree $d$.  We claim that for sufficiently large $k$, $C_{k+1}(d)$ doesn't depend on $k$.  To see this, consider the list of degrees of nodes in the set $\V_2$ from $F_{k+1}$ corresponding to cut branches.  This list is naturally partitioned into two halves of equal length.  The inner half is exactly the list from $F_k$, whereas the minimal degree for the outer half goes to infinity as $k\ra\infty$.

From the above calculations, one can prove that the convergence is uniform: for every $\epsilon>0$ there exists $d_0$ such that $\sum_{d\geq d_0}p_k(d)<\epsilon$ for all $k\in\N$.  From this, it is straightforward to conclude that $\sum_{d\geq 1}p_\infty(d)=1$.
\end{proof}

Table~\ref{T:T1} reports the value $p_\infty(d)$ for all $3\leq d\leq 125$, which we computed using Equation~\ref{E:limp} and a computer.

The Fractal family $\mF$ is neighbor-independent for the following reason.  Define $\Delta_k=\V_2\times\V_2$ (in the bipartite graph associated to $T_k$), and let  $\tilde\Delta_k$ be the set of pairs $(v,w)\in\Delta_k$ that have at least one common neighbor in $\V_1$.  Due to the recursive definition of the family,
$$\frac{|\tilde\Delta_k|}{|\Delta_k|}\approx \frac 1 4 \cdot \frac{|\tilde\Delta_{k-1}|}{|\Delta_{k-1}|}.$$
This is because chords from different quadrants have no common neighbors, and the contribution of the cut edges between the quadrants becomes negligible as $k\ra\infty$.  Therefore $\frac{|\tilde\Delta_k|}{|\Delta_k|}\ra 0$.

\begin{table}\label{T:T1}\caption{$p_\infty(d)$ for all $3\leq d\leq 125$ for the Fractal family}
\centering
\begin{tabular}{c||c|c|c|c|c|c|c|c|c|}
$d$ & $3$ & $5$ & $11$ & $13$ & $25$ & $27$ & $29$ & $55$ & $57$  \\
\hline
$p_\infty(d)$ & $\frac{5}{12}$ & $\frac{1}{4}$ & $\frac{1}{12}$ & $\frac{1}{12}$ & $\frac{1}{96}$
 & $\frac{1}{24}$ & $\frac{1}{32}$ & $\frac{1}{384}$ & $\frac{1}{128}$
\end{tabular}
\end{table}
\begin{table}
\centering
\begin{tabular}{c||c|c|c|c|c|c|c|}
$d$ & $59$ & $61$ & $117$ & $119$ & $121$ & $123$ & $125$ \\
\hline
$p_\infty(d)$ &  $\frac{7}{384}$
 & $\frac{5}{384}$  & $\frac{1}{1536}$ & $\frac{1}{384}$ & $\frac{1}{256}$ & $\frac{1}{128}$ & $\frac{3}{512}$
\end{tabular}
\end{table}

Let $f$ be the power series for $\mF$.  Let $f^+$ denote polynomial approximations of $f$ obtained by truncating after the $x^{125}$ term.  $\mF$ is bounded and convergent, so we can apply Proposition~\ref{P:punchline}, which yields the following.
$$(Q^-(\mF))^{-1}\leq e\cdot\overline{f}\leq e\cdot\overline{f^+}  = 3.2508\cdots.$$
Adding more terms to $f^+$ improves this bound, but only slightly; in fact, it can be shown that
$e\cdot\overline{f}>\mathfrak{b}=3.2099\cdots$.
Since $\sum p_\infty(d)=1$, this can be proven by choosing enough terms to make the sum close to $1$ and hence bounding the tail. 
\end{example}

\begin{example}[The Uniform Family] Consider a family $\mF = \{T_n\subset G(n)\mid n\in\N\}$ where each spanning tree $T_n$ is uniformly randomly chosen.  We'll call $\mF$ a \emph{uniform family}.  Let $p_n$ denote the degree-mass function of $T_n$.  Although the randomness means that $T_n$ and $p_n$ are not uniquely determined, the limit $p_\infty = \lim(p_n)$ is uniquely determined with probability $1$ because it can be interpreted as the degree-mass function for a uniformly random spanning tree of the infinite lattice $\Z^2$.  Manna et al. studied this situation in~\cite{Dhar} and proved that $p_\infty(3)\approx 0.29454$ and $p_\infty(5)\approx 0.12409$ (these are decimal approximations of the exact values that they calculated).  The also provided strong numerical and theoretical evidence for their conjecture that $d\mapsto p_\infty(d)$ decays as $d^{-8/5}$ for large $d$.

This conjecture indicates that the following is a reasonable approximation of the power series $f$ of $\mF$:
$$f(x)\approx 2-x-(0.29454)x^3 - (0.12409)x^5 - C\cdot\sum_{d\geq 7\text{ odd }}d^{-8/5},$$
where $C$ is chosen to make it a valid probability mass function.  This yields:
\begin{equation}\label{E:Univ_bound}
e\cdot\overline{f}= 3.433\cdots.
\end{equation}
This is higher than the corresponding value for the fractal family, which is evidence that the fractal family has higher MST probability than does a uniform family, although we do not know whether a uniform family is necessarily bounded or neighbor-independent with probability tending to one, or whether all uniform families necessarily have the same decay base.
\end{example}

Observe that a uniform family behaves qualitatively like fractals, not like centipedes, in the sense that $p_\infty$ is a valid probability function.

\section{Questions and Conjectures}
A natural open question is to identify the optimal upper and lower bounds in Theorem~\ref{T:bound2}, which will require the construction of a family of spanning trees that achieves each bound.  A potential starting point for addressing the lower bound is to answer the following.
\begin{question} Is the decay base of the double spiral family equal to $\frac 1 4$?
\end{question}

The fractal family $\mF$ might not achieve the optimal upper bound, but it is reasonable to expect it to come close.  This family was originally believed to be optimal for the related problem of minimizing average stretch, but families were later constructed that were slightly better~\cite{Koh},\cite{Lieb}.  Proposition~\ref{P:punchline} only provided a lower bound for the decay base of the fractal family, but this bound isn't sharp, so the following question remains.
\begin{question} What is the decay base of the fractal family?
\end{question}

One way to address this is to quantify how close the inequality in Proposition~\ref{P:punchline} comes to being an equality.  Here is a concise overview of the proof of Proposition~\ref{P:punchline}.
\begin{align*}
\PMST(T_n)
  & \sim e^{-M}\cdot\E\left(\mA(T_n)^{-M} \right)
    \geq e^{-M}\cdot\left(\E(\mA(T_n))\right)^{-M}\\
  & \sim e^{-M}\cdot \overline{f}^{-M} \sim \left(e\overline{f}\right)^{-n^2}
\end{align*}
The only inequality here comes from Jensen's Inequality, so it would be necessary to understand the Jensen Gap, which requires understanding how quickly the variance of $\mA(T)$ decays with $M$.  The following conjecture quantifies this.

\begin{conjecture} If $\mF=\{T_n\subset G(n_n)\mid n\in\N\}$ is a bounded, convergent, neighbor-independent family, and $f$ is its associated power series, then
$$Q(\mF) = \frac{\exp\left( \lim_{n\ra\infty} \frac{(n\cdot\sigma_n)^2}{2}\right)}{e\cdot \overline{f}},$$
where $\sigma_n^2$ denotes the variance of the random variable $X_n=\ln(\mA(T_n))$.
\end{conjecture}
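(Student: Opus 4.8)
The plan is to make rigorous the heuristic overview given just before the conjecture, namely to track the Jensen gap in the inequality $\E(\mA(T_n)^{-M})\geq(\E(\mA(T_n)))^{-M}$ via a second-order Laplace-type expansion in the random variable $X_n=\ln(\mA(T_n))$. First I would write $\mA(T_n)^{-M}=e^{-MX_n}$ and, setting $\mu_n=\E(X_n)$, expand
\begin{equation*}
\E\left(e^{-MX_n}\right)=e^{-M\mu_n}\cdot\E\left(e^{-M(X_n-\mu_n)}\right).
\end{equation*}
By Proposition~\ref{P:lulu}, $\mu_n\ra\ln\overline{f}$, so $e^{-M\mu_n}\sim\overline{f}^{-M}$ contributes the $(e\overline f)^{-n^2}$ factor after the $e^{-M}$ prefactor from Proposition~\ref{P:dodo}. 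The entire content of the conjecture is therefore the claim that
\begin{equation*}
\lim_{n\ra\infty}\frac{1}{M}\ln\E\left(e^{-M(X_n-\mu_n)}\right)=\lim_{n\ra\infty}\frac{(n\sigma_n)^2}{2}
\end{equation*}
(noting $M\sim n^2$, so $M\sigma_n^2\sim (n\sigma_n)^2$), i.e. that the cumulant generating function of $-M(X_n-\mu_n)$ is, to leading exponential order, governed purely by the variance term $\tfrac12 M^2\sigma_n^2$.

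The key steps, in order, would be: (1) From Lemma~\ref{L:var} and the structure of the proof of Proposition~\ref{P:lulu}, show that $X_n$ concentrates, i.e. $\sigma_n\ra 0$; in fact one needs the finer quantitative statement that $M\sigma_n^2$ converges (this is exactly what the conjecture presupposes, so it should be promoted to a hypothesis or established by a refinement of the variance bound in Lemma~\ref{L:var} that tracks the $1/M$ decay rate of $\Var(P_i(\alpha)/M)$ rather than merely its vanishing). (2) Establish a two-sided bound showing the tilted expectation is dominated by the near-Gaussian regime: for the lower bound, restrict the expectation to the event $\{X_n-\mu_n\in[-t_n,t_n]\}$ for a suitable $t_n\ra 0$ with $Mt_n^2\ra\infty$, use that $\ln\mA\leq\ln 2$ forces an a priori two-sided bound on $X_n$, and apply a local central limit heuristic or a Taylor expansion of $\ln\E(e^{-MU})$ in the scaled variable $U=(X_n-\mu_n)/\sigma_n$; for the upper bound, split off the tail contribution using that $\mA(T_n)\in[0,2]$ bounds $e^{-M(X_n-\mu_n)}$ crudely by $2^M e^{M\mu_n}$ on a small-probability event, and control the probability of that event by Chebyshev using $\sigma_n^2$. (3) Conclude $\frac{1}{M}\ln\E(e^{-M(X_n-\mu_n)})=\frac{M\sigma_n^2}{2}+o(1/M\cdot M)$ — more carefully, that the difference between the two sides, divided by $M$, tends to $0$ — which after multiplying through by $-1$ and exponentiating gives the stated formula for $Q(\mF)$.

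The main obstacle is step (2), specifically controlling the large-deviation tail of $X_n=\tfrac1M\sum\ln(P_i(\alpha)/M)$ under the uniform measure on $S(T_n)$. The summands $\ln(P_i(\alpha)/M)$ are strongly dependent (the passing times are a monotone sequence determined by a single random permutation), so standard concentration tools do not apply off the shelf; one would likely need a martingale/Azuma argument along the exposure of $\alpha$ one branch at a time, or an entropy/transportation argument exploiting that swapping two elements of $\alpha$ changes each $P_i$ by a bounded amount. Even granting concentration, the delicate point is that the conjecture asserts the Jensen gap is \emph{exactly} the Gaussian value $e^{M^2\sigma_n^2/2}$ to leading order, which requires that the third and higher scaled cumulants of $X_n$ be genuinely negligible after multiplication by $M$ — equivalently that $X_n$ satisfies a central limit theorem with Gaussian-accurate moderate deviations on the scale $1/\sqrt M$. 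Proving this would probably require identifying the limiting covariance structure of the indicator variables $Y_v$ from the proof of Lemma~\ref{L:var} precisely enough to see a nondegenerate Gaussian limit for $\sqrt M(X_n-\mu_n)/(n\sigma_n)$, and that structural analysis is family-dependent and likely the real content left open.
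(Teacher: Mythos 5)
Your proposal follows essentially the same route as the paper's own ``sketch of potential proof idea'': both reduce the conjecture to showing that the Jensen gap $\E\left(e^{-MX_n}\right)/e^{-M\,\E(X_n)}$ is, to leading exponential order, the Gaussian value $e^{(M\sigma_n)^2/2}$, via Stirling's approximation, Proposition~\ref{P:lulu} for $\E(X_n)\ra\ln\overline{f}$, and the assumed asymptotic normality of $X_n=\ln(\mA(T_n))$. Since the statement is only a conjecture and the paper offers no complete proof, neither argument closes it; you are in fact more explicit than the paper about what remains open, namely a CLT with moderate-deviation accuracy for the strongly dependent summands $\ln(P_i(\alpha)/M)$ and the convergence of $M\sigma_n^2$.
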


\begin{proof}[Sketch of potential proof idea] Consider a fixed large value of $n$, and interpret all running notation with respect to the spanning tree $T=T_n$.  Set $X=X_n$.  The proof of Proposition~\ref{P:lulu} establishes that
$$\E(X)\approx\ln\overline f.$$
Let $\Psi$ be the moment generating function of $X$.  Stirling's approximation gives:
\begin{align*}
\Psi(-M)
  & = \E\left(e^{-MX}\right) = \E\left(\prod\frac{M}{P_i(\alpha)}\right)
   = M^M\cdot\E\left(\prod\frac{1}{P_i(\alpha)}\right)\\
  & \approx e^M\cdot M!\cdot\E\left(\prod\frac{1}{P_i(\alpha)}\right) = e^M\cdot\PMST(T).
\end{align*}

On the other hand, if we assume that $X$ becomes normal as $n\ra\infty$, then
$\Psi(t) \approx \exp\left(\E(X) t + \frac{\sigma^2}{2}t^2\right),$ so

\begin{align*}
\Psi(-M) &\approx \frac{e^{(\sigma\cdot M)^2/2}}{\overline{f}^M}
\end{align*}
Combining the previous two equations yields
\begin{align*}
\PMST(T)
   \approx\frac{e^{(\sigma\cdot M)^2/2}}{e^M\cdot \overline{f}^M}
    = \left(\frac{e^{\sigma^2 M/2}}{e\cdot\overline{f}}\right)^M,
\end{align*}
from which the result follows.
\end{proof}


\bibliographystyle{amsplain}

\end{document}